\documentclass[smallextended,referee,envcountsect,]{svjour3}
\usepackage{graphicx} 
\usepackage{amsmath}
\usepackage{amssymb}
\usepackage{authblk}
\usepackage{tikz, pgf}
\usepackage{xcolor} 
\usepackage[displaymath,mathlines]{lineno}
\usetikzlibrary{decorations.markings}
\usepackage[english]{babel}
\usepackage[T1]{fontenc}
\usepackage[utf8]{inputenc}
\usepackage{amsfonts}
\usepackage{mathrsfs}
\usepackage{xfrac}
\usepackage{lmodern}
\DeclareFontFamily{OMX}{lmex}{}
\DeclareFontShape{OMX}{lmex}{m}{n}{<-> lmex10}{}
\usepackage{fix-cm}
\usepackage{listings}
\usepackage{booktabs} 
\usepackage{enumerate}   
\usetikzlibrary{arrows}
\usepackage{float}
\usepackage{subcaption}
\usepackage{hyperref}
\usepackage{bbm}

\allowdisplaybreaks
\overfullrule=1pt



\newcommand{\R}{{\mathbb{R}}}

\newcommand{\lra}{\longrightarrow}



\newcommand{\Wass}{\mathbf{d}}

\newcommand{\G}{\Gamma}

\newcommand{\cA}{{\mathcal A}}

\newcommand{\cC}{{\mathcal C}}

\newcommand{\cM}{{\mathcal M}}

\newcommand{\cV}{{\mathcal V}}
\newcommand{\cT}{{\mathcal T}}

\renewcommand{\phi}{\varphi}
\renewcommand{\a}{{\alpha}}
\renewcommand{\b}{{\beta}}
\newcommand{\g}{{\gamma}}
\renewcommand{\d}{{\delta}}

\renewcommand{\th}{\theta}

\newcommand{\bm}{{\bar m}}

\newcommand{\supp}{\mathrm{supp}}

\usepackage{fancyhdr}

\newtheorem{theo}{\bf Theorem}[section]

\newtheorem{prop}[theo]{\bf Proposition}

%
%
\providecommand{\keywords}[1]{\textbf{Keywords.} #1}
\providecommand{\mathsubjclass}[2]{\textbf{2020 Mathematics Subject Classification.} #1}

\begin{document}

\title{A Network Model for Urban Planning}

\author{Fabio Camilli \and Adriano Festa \and Luciano Marzufero.}

\institute{Fabio Camilli \at
               Dipartimento di Ingegneria e Geologia, Univ. ``G. D'Annunzio'' Chieti-Pescara,\\
		viale Pindaro 42, 65127 Pescara (Italy), \\
               fabio.camilli@unich.it
           \and
               Adriano Festa  \at
               Dipartimento di Scienze Matematiche ``G. L. Lagrange'', Politecnico di Torino \\
               Corso Duca degli Abruzzi 24, 10129 Torino, Italy\\
               adriano.festa@polito.it
          \and
	      Luciano Marzufero \at
	      Faculty of Economics and Management, Free University of Bozen-Bolzano\\		
	      Piazza Universit\`a 1, 39100 Bolzano, Italy\\
	      luciano.marzufero@unibz.it
}

\date{Received: date / Accepted: date}

\maketitle

\begin{abstract}
We study a mathematical model to describe the evolution of a city, which is determined by the interaction of two large populations of agents,  workers and firms. The map of the city is described  by a network with the edges representing at the same time residential areas and communication routes. The two populations  compete for space while interacting through the labour market. The resulting model is described by a two population Mean-Field Game system coupled with an Optimal Transport problem. We prove existence and uniqueness of the solution and we provide several numerical simulations.
\end{abstract}
\keywords{Network; Mean-field Game; Optimal Transport;  numerical approximation.}
\par\smallskip\noindent
\mathsubjclass{91A16; 49Q22; 35R02; 49N80.}

\par\smallskip\noindent
Communicated by Fausto Gozzi.

\section{Introduction}
Currently, 55\% of the global population lives in urban areas, a number expected to rise to 68\% by 2050, according to the United Nations. The growth is driven by population increases and rural-to-urban migration for economic opportunities.  This shift has emerged as a focal point in economic literature. Mathematical frameworks aimed at analyzing and comprehending this phenomenon have been proposed  in \cite{acpt,bagagiolo2,bagagiolo1,buttazzo,carlier,lucas}.
\par
A model for urban planning has been recently introduced in \cite{barilla}, where the shape of the city is  determined by the interaction  between two different populations representing workers and, respectively, firms, each one formed by a large number of indistinguishable agents. They compete for land use, paying a rent for space occupation. Moreover they interact through the labour market since wages are paid by firms to workers, who choose the residence and workplace so as to maximize the revenue, i.e. wage minus commuting cost. Instead the strategy of the firms aims to minimize the labour cost. It follows that  agents of each population solve  a stochastic control problem  where the cost functional depends on the distribution of similar agents and the interaction with the ones of the other population. Moreover, an additional condition expressing equilibrium of the labour market is imposed. From a mathematical point of view, the previous model leads to a two population Mean-field Game  (MFG in short) system \cite{lasry2007mean,huang2006large,cardaliaguet2012notes} coupled with an Optimal Transport problem \cite{villani2,santambrogio} between the distributions of the two populations.
\par
While the model in \cite{barilla} has been studied in the periodic Euclidean case, in this paper we consider a similar problem, but in the case that the state space is given by a network. Indeed, such a geometric structure can be interpreted as the plan of a city with the edges representing both residential areas and communication routes. Each area has specific characteristics that influence the rental cost, as available service, density of population, pollution, etc. Furthermore, the speed of the connections, which can be described by the length of the edges and other parameters in the problem, plays an important role since it affects the commuting cost between residential areas and work places. Hence the study of urban planning problem on networks introduces some interesting peculiarities with respect to the Euclidean case. Concerning the mathematical approach, in \cite{barilla} existence of a solution is obtained via a variational technique which requires a symmetric interaction among the two populations: the land rent is the same for workers and firms and depends only on the total density of the two populations. Here we prove existence by means of a  fixed-point argument and this approach does not require a symmetric behavior between the two populations. Indeed, it seems to be natural to assume that people prefer to avoid to live near polluting factories or in overcrowded residential areas, while industries tend to cluster to take advantage of a more effective transport system. Moreover, with respect to \cite{barilla}, we also prove an uniqueness result under a monotonicity assumption involving the cost functions of the two populations.\par
 Classical theories of urban land use and spatial equilibrium developed in the works \cite{Alonso1964,Mills1967,Muth1969}  describe how agents' location choices result from the trade-off between commuting costs and land rents. More recent contributions, such as \cite{Fujita1999,Duranton2004}, have highlighted how agglomeration forces and transport infrastructure shape urban structures and labor markets. In this context, the present framework captures the influence of commuting costs, land-use competition, and agglomeration effects through population densities and congestion-dependent rents. At the same time, it deliberately abstracts from features that are central in many classical urban economics models, such as discrete location choices, zoning constraints, heterogeneous firms or workers, and explicit housing market institutions.
\par
The paper is organized as follows. In Section \ref{sec:model}, we introduce the model and provide its mathematical formulation, which consists of an MFG system coupled with an Optimal Transport problem. In Section \ref{sec:existence}, we investigate the existence and uniqueness of solutions to the problem. Section \ref{sec:discretization} focuses on the numerical approximation, where we describe various examples to demonstrate the model's sensitivity to the network structure and model parameters. Finally, in Appendix \ref{app:A}, we present and prove several useful results related to the Hamilton-Jacobi equation, the Fokker-Planck equation and Optimal Transport on networks. Many of these results are well-known in the Euclidean case and we discuss the adaptation to the network framework.

\section{The Two Population Model on Network}\label{sec:model}
In this section, we describe the mathematical model and derive the MFG system coupled with the Optimal Transport problem which characterizes the equilibria of the problem.\\
The state space of the problem is given by a network $\Gamma\subset\R^d$ which is composed by a finite collection of bounded edges $\mathcal{E}:=\left\{ \Gamma_{\alpha}:\alpha\in\cA\right\}$ connecting a finite collection of vertices $\cV:=\left\{\nu_{j}:j\in J\right\}$. Two edges can intersect only at the vertices, i.e. for $\alpha,\beta\in\cA$ with $\alpha\not=\beta$, then $\Gamma_\alpha\cap \Gamma_\beta$ is either empty or made of a single vertex. For simplicity, we assume that the edges are segments and, for $\Gamma_{\alpha}\in\mathcal{E}$ connecting two vertices $\nu_i$ and $\nu_j$ with $i<j$, we consider the parametrization $\pi_\alpha: [0,\ell_\alpha]\lra \Gamma_\alpha$ given by
\begin{equation}\label{eq:par_arc}	 
\pi_\alpha(y)=[y\nu_j+(\ell_\alpha-y)\nu_i]\ell_\alpha^{-1}\qquad\text{for } y\in[0,\ell_\alpha],
\end{equation} 
where $\ell_\alpha$ is the length of the edge. We   denote by $\cA_{i}=\left\{ \alpha\in\cA:\nu_{i}\in\Gamma_{\alpha}\right\}$  the set of indices of edges that are adjacent to the vertex $\nu_{i}$. For a function $v:\G\lra\R$,   $v_\a$  is the   restriction of $v$ to $\G_\alpha$, i.e. $v_{\alpha} (y):=v|_{\Gamma_{\alpha}}\circ\pi_{\alpha} (y)$ for $y\in (0,\ell_\alpha)$. 
Derivatives   of a function on the network are defined in standard way with outward derivatives at the vertices  (see \cite{achdou} for example). We set $|\Gamma|=\sum_{a\in \cA}\ell_\alpha$, where $\ell_\alpha$ as in \eqref{eq:par_arc} and for $v:\G\to\R$ we define
\begin{equation}\label{eq:int}
	\int_\Gamma v(x)dx=\frac{1}{|\Gamma|}\sum_{\alpha\in\cA} \int_{\Gamma_\alpha} v_\alpha(x)dx.
\end{equation}
\par\noindent\textbf{The Optimal Control Problems for the Populations.}
We consider an evolutive model where the distribution of workers and firms at time $t\in [0,T]$ is described by probability distribution $m_1(t)$ and, respectively, $m_2(t)$. The initial distributions $m^1_0$ and $m^2_0$ are given probability measures on the network $\G$.\\
The representative agent of the workers population solves an optimal control problem with dynamics given by a network Markov process $(X^1(s), \alpha^1(s))$ such that $X^1(s)\in \G_{\a^1(s)}$ with $X^1(t)=x\in \G$ (see, for example, \cite{freidlin2} for stochastic processes on networks). Inside the edge $\Gamma_{\alpha^1(s)}$, the process is characterized by the stochastic differential equation
$$
dX^1(s)= u^1_{\a^1(s)}(X^1(s))ds+\sqrt{2\mu^1_{\alpha^1(s)}}dB^1(s),
$$
where, for $\alpha\in \cA$, $u^1_{\a}:\G_\a\lra\R$ is a feedback control law with value in the compact set $U^1_{\a}=[\underline{u^1_{\a}}, \overline{u^1_{\a}}]$, $B^1(s)$  a one dimensional Wiener process and $\mu^1_\a>0$. Note that, since $\mu^1_\a>0$ for any $\a\in\cA$, $\mathbb{P}(X(t)\in \cV: t\in [0,T])=0$ (see e.g. \cite{ohavi}). When the agent arrives at a vertex $\nu_i$, enters one of the adjacent edges $\G_\a \in \cA_{i}$ with probability
$$
	p^1_{j\a}=\frac{\gamma^1_{j\alpha}\mu^1_{\alpha}}{\sum_{\a\in\cA_j} \gamma^1_{j\alpha}\mu^1_{\alpha}},
$$
where $\gamma^1_{j\alpha}>0$, $j\in J$ and $\a\in\cA$, are parameters of the model which may represent a preference for one connection over another. The worker living at $x\in \Gamma$ at time $t$ {\it minimizes} the cost functional
$$
\mathbb{E}_{x,t}\int_t^T \Big[ L^1(u^1_{\alpha^1(s)},X^1(s))-r(s,X^1(s))+R^1[m^1(s),m^2(s)](X^1(s))\Big]ds,
$$
where $L^1$ represents the mobility cost, $r$ the revenue that individuals, living at location $x$, bring home (net of commuting cost) and $R^1$ the rent cost.\\
The optimal control problem for the  firm agent is similar. The dynamics is given by a network Markov process  $(X^2(s), \a^2(s))$ with $X^2(s)\in \G_{\a^2(s)}$ and $X^2(t)=x\in\G$, which, inside the edge, is described by the stochastic differential equation 
$$
dX^2(s)=u^2_{\a^2(s)}(X^2(s))ds+\sqrt{2\mu^2_{\alpha^2(s)}}dB^2(s),
$$
where, for $\alpha\in \cA$, $u^2_{\a}:\G_\a\lra\R$ is a feedback control law with value in the compact set $U^2_{\a}=[\underline{u^2_{\a}}, \overline{u^2_{\a}}]$, $B^2(s)$ a one dimensional Wiener process and $\mu^2_\a>0$.  As before, the agent spends $0$-time at the vertices with probability one and enters in one of the adjacent edge with probability
\begin{equation}\label{coeff2}
	p^2_{j\a}=\frac{\gamma^2_{j\alpha}\mu^2_{\alpha}}{\sum_{\a\in\cA_j} \gamma^2_{j\alpha}\mu^2_{\alpha}} ,
\end{equation}
where $\gamma^2_{j\alpha}>0$, $j\in J$ and $\a\in\cA$. The cost functional to be minimized is given by
$$
\mathbb{E}_{x,t}\int_t^T \Big[ L^2(u^2_{\alpha^2(s)}, X^2(s))+w(s,X^2(s))+R^2[m^1(s),m^2(s)](X^2(s))\Big]ds,
$$
where $L^2$ is the mobility cost for the firm, $w$ the wage that the firm, located at $x$, pays to the workers and $R^2$ the rent cost.
\begin{remark}
In \cite{barilla}, it is assumed that the rent cost is the same for both the populations and depends only on the total demand,  i.e. $R^1[m_1(t),m_2(t)]=R^2[m_1(t),m_2(t)]=f(m_1(t)+m_2(t))$ for some increasing function $f:\R^+\lra\R^+$. Here we  consider different and more general coupling costs which also take into account different needs for the two populations, see Section \ref{sec:discretization} for details.
\end{remark}
\par\noindent\textbf{The Equilibrium Condition for the Labour Market.}
In addition to the used space, workers and companies also interact through the revenue function $r$, the wage function $w$ and the free mobility of the labour market. Indeed, denoted with $c(x,y)$ the cost of commuting, at time $t\in(0, T)$ people living at $x$ choose to work at location $y$ which maximizes their revenue, i.e.
\begin{equation}\label{revenue}
r(t,x)=\max_{y\in\G}\{w(t,y)-c(x,y)\}.
\end{equation}
In the same way, firms located at $y$ at time $t$ hire workers to minimize the wage, i.e.
\begin{equation}\label{wage}
w(t,y)=\min_{x\in\G}\{r(t,x)+c(x,y)\}.
\end{equation}
An equilibrium in the labour market is a configuration where there is no incentive for workers to change the living place and for firms to move in another place. This condition can be expressed   in the following way (see \cite{barilla}): the couple of continuous functions $(w(t,\cdot), r(t,\cdot))$ induces an equilibrium in the labour market at time $t\in (0,T)$ if there is a transport plan $\gamma$ between $m_1(t)$ and $m_2(t)$, i.e. $\gamma$ has marginals $m_1(t)$ and $m_2(t)$, such that  
\begin{equation}\label{equilibrium}
	w(t, y)-r(t, x)=c(x, y) \quad\text{on $\supp(\gamma)$},
\end{equation}
where $\supp(\gamma)$ is the support of $\gamma$. Hence  the equilibrium condition is equivalent to find an    optimal  transport  plan for  the   problem 
\begin{equation}\label{OT_primal}
	 \cC(m_1(t), m_2(t))=\inf_{\gamma\in\Pi(m_1, m_2)}\int_{\Gamma\times\Gamma}c(x, y)d\gamma(x, y),
\end{equation}
where $\Pi$ are the transport plans between $m_1$ and $m_2$.
Taking into account \eqref{revenue}, \eqref{wage},\eqref{equilibrium} and  the Kantorovich duality (see e.g. \cite{santambrogio,mugnolo,toledo}), $ \cC(m^1(t), m^2(t))$ can be equivalently rewritten in the dual form  as
\begin{align}
\label{dualformulationmodel}
\cC(m^1(t), m^2(t)) 
	 =\sup\Bigg\{ \int_{\Gamma}w(t, y)dm^1(t)(y)-\int_{\Gamma}r(t, x)dm^2(t)(x):\nonumber\\
	   \text{$w, r$ continuous and $w(t, y)-r(t, x)\leq c(x, y)$ for every }x, y\in\Gamma\Bigg\}.
\end{align}
Hence, for any $t\in(0,T)$, the equilibrium condition in the labour market is equivalent to find a pair of continuous functions $(\th_1,\th_2)=( w(t, \cdot), -r(t, \cdot),)$ satisfying $\th_1( x)+\th_2( y)\leq c(x, y)$ and optimal for the dual problem \eqref{dualformulationmodel}.
\par
\bigskip
\noindent
\textbf{The  Mean-Field Game-Optimal Transport System.} The optimal control problems solved by the two populations are  coupled through the rent costs $R^1$ and $R^2$  and the     potentials $\th_1$ and $\th_2$ of the optimal transport problem. The   necessary conditions for equilibria  can be characterized by a Mean-Field Game system coupled with the optimality conditions for the transport system.\\
Associated with the Langrangian $L^\iota$,  $\iota=1,2$, of workers and firms, we introduce the Hamiltonians $H^\iota:(\cup_{\a\in\cA}\Gamma_{\a}\setminus\cV)\times\R\lra\R$ which are defined on each edge by   
\begin{equation}\label{hamiltonian}
	H_\a^\iota(x, p)=\sup_{u \in U^\iota_{\a}}\{-up-L_\a^\iota(x, u)\},\quad x\in\Gamma_{\a}\setminus\cV,\ p\in\R.
\end{equation}
The Mean-field Game-Optimal Transport {(MFGOT)} problem reads as
\begin{enumerate}
\item[$(i)$] {\it Forward-Backward MFG system:} for $(t, x)\in (0, T)\times(\Gamma_{\a}\setminus\cV),\ \a\in\cA$, $\iota=1,2$,
\begin{align*}
	&-\partial_t\phi_\iota-\mu^\iota_{\a}\partial^2\phi_\iota+H^\iota(x, \partial\phi_\iota)=R^\iota[m_1(t),m_2(t)]+\th_\iota,\qquad   \\
	&\partial_tm_\iota-\mu^\iota_{\a}\partial^2m_\iota-\partial(m_\iota\partial_pH^\iota(x, \partial\phi_\iota))=0.
\end{align*}
\item[$(ii)$] {\it Transition conditions: }for $(t, \nu_j)\in (0, T)\times\cV$, $\a,\b\in\cA$,  $\iota=1,2$,
\begin{align*}
	&\sum_{\a\in\cA_j}\gamma^\iota_{j\a}\mu^\iota_{\a}\partial_{\a}\phi_\iota(t, \nu_j)=0,\qquad &\\
	&\sum_{\a\in\cA_j}\mu^\iota_{\a}\partial_{\a}m_\iota(t, \nu_j)+n_{j\a}\partial_{p}H_\a^\iota\left(\nu_j, \partial\phi_\iota(t, \nu_j)\right)m_\iota|_{\Gamma_{\a}}(t, \nu_j)=0,\qquad&\\
	&\phi_\iota|_{\Gamma_{\a}}(t, \nu_j)=\phi_\iota|_{\Gamma_{\beta}}(t, \nu_j), \,\frac{m_\iota|_{\Gamma_{\a}}(t, \nu_j)}{\gamma^\iota_{j\a}}=\frac{m_\iota|_{\Gamma_\beta}(t, \nu_j)}{\gamma^\iota_{j\beta}}\, ,
\end{align*}
($n_{j\a}=1$ if  $\nu_j =\pi_{\alpha} (\ell_\alpha)$ 
and $n_{j\a}=-1$ if $\nu_j =    \pi_{\alpha} (0)$).
\item[$(iii)$] {\it Initial-terminal conditions:} for $x\in\Gamma$, $\iota=1,2$,
\begin{align*}
	\phi_\iota(T, x)=0, \ \ m_\iota(0, x)=m_0^\iota,\quad x\in\G.
\end{align*}
\item[$(iv)$] {\it Optimal Transport problem:} for $t\in (0, T)$,
\begin{align*}
	&\th_1(t, x)+\th_2(t, y)\leq c(x, y)\quad \text{for all $(x,y)\in\G$ and} \\
	& \cC(m_1(t), m_2(t))= \int_{\Gamma}\th_1(t, x)dm_1(t)(x)+\int_{\Gamma}\th_2(t, y)dm_2(t)(y).
\end{align*}
\end{enumerate}
A solution to {(MFGOT)} system is given by two triples $$(\phi_\iota(t, x), m_\iota(t, x), \th_\iota(t, x)), \quad \iota=1,2,$$ satisfying $(i)$-$(iv)$ in a suitable sense (see the next section), where $\phi_\iota(x,t)$ represents the value function for an agent of population $\iota$ at position $x$ and time $t$,  $m_\iota(x,t)$   the corresponding distribution  of agents at $(x,t)$ and $(\th_1,\th_2)=(-r(t, \cdot), w(t, \cdot))$ are the opposite of the revenue for workers and the wage for firms at time $t$.\\ 
The problem (MFGOT) is composed by a family of  Hamilton-Jacobi equations and a Fokker-Planck equations, defined on each edge $\G_\a$, see $(i)$. 
The equations defined  in the edges communicate through the transition condition at the vertices in $(ii)$, a Kirchhoff condition for the Hamilton-Jacobi equation and a conservation of the flux condition for the Fokker-Planck equation. Moreover the functions $\phi_\iota$, $\iota=1,2$, are assumed to be continuous at the vertices, while this not necessarily holds for $m_\iota$, whose condition expresses a partition law for the distribution of the density at the vertices,  see in $(iii)$. Finally, for any $t\in (0,T)$, in $(iv)$ an equilibrium condition given by an optimal transport problem is imposed. 
The Hamilton-Jacobi equation and the Fokker-Planck equation are coupled through the cost $R^\iota[m_1,m_2]$ in the former equation and the optimal control $\partial_pH^\iota(x, \partial\phi_\iota)$ in the latter one. Moreover the MFG system and Optimal Transport problem are coupled in the HJ equations through the Kantorovich potential $\th_\iota$, which in turn depends on $m_\iota$. 
Mean Field Games on networks have been considered in \cite{achdou,camillimarchi_sicon,camillimarchi}.

\begin{remark}[Modeling Remarks]
Several simplifying assumptions are introduced to maintain mathematical tractability while preserving the model’s economic relevance.
The hypothesis that workers and firms continuously adjust their locations should be interpreted as an \emph{idealized representation of long-run mobility} or \emph{gradual adaptation through search and matching processes}, rather than as literal instantaneous relocation. Accordingly, the time variable is meant to be understood at a coarse temporal scale (e.g., years rather than days), so that continuous spatial adjustment captures the aggregate outcome of many small and infrequent relocation or job-search decisions. This interpretation differs with the instantaneous time scale underlying the optimal transport problem, which models the short-run optimization of commuting patterns between residences and workplaces.

The choice to treat commuting costs as exogenous serves to isolate the role of spatial geometry and network connectivity in shaping equilibrium configurations. Nevertheless, the framework could readily accommodate endogenous congestion effects by allowing commuting costs to depend on local population densities, as is standard in urban transport models \cite{SmallVerhoef2007}.
Finally, the model is not intended to describe short-run relocation dynamics or discrete moving decisions at the individual level; rather, its scope is confined to long-run spatial equilibria and gradual adjustment paths.

The \emph{forward-looking behavior of workers}, modeled through stochastic control, represents an \emph{expected-utility optimization under uncertainty}, consistent with the notion of rational expectations. 
Alternative formulations based on \emph{bounded rationality} \cite{Simon1955,Manski1993,camillimarchi} could also be incorporated in future extensions, introducing more behavioral realism without altering the structure of the Mean-Field Game system. 

From an economic perspective, the \emph{rent terms} \( R^1[m_1,m_2] \) and \( R^2[m_1,m_2] \) formalize the \emph{competition for land and space} that lies at the core of classical urban economics models \cite{Alonso1964,Mills1967,Muth1969,GlaeserGyourko2005}. 
In our formulation, these functions act as the spatial analog of land or housing rents that adjust endogenously to local congestion and population density. 
Moreover, the coupling of the two populations through the \emph{Optimal Transport problem} introduces a \emph{matching mechanism} between workers and firms: each equilibrium transport plan \( \gamma \) identifies an optimal allocation of labor in space, analogous to the \emph{search-and-matching frameworks} of labor economics \cite{ShimerSmith2000}. 
This perspective highlights that, beyond its mathematical formulation, the model provides a bridge between \emph{classical urban land-use theory} and \emph{modern spatial matching models}.
\end{remark}

\section{Existence and Uniqueness}\label{sec:existence}
The aim of this section is to prove an existence and uniqueness result for  
solution to the {(MFGOT)} system. We start recalling some functional spaces defined on the network. The space
$C(\Gamma)$ is composed of the continuous functions on $\Gamma$ and,  
for $m\in\mathbb{N}$, we set
\[
C^{m}\left(\Gamma\right):=\left\{ v\in C\left(\Gamma\right):v_{\alpha}\in C^{m}\left(\left[0,\ell_{\alpha}\right]\right)\text{ for all }\alpha\in\cA\right\} ,
\]
  endowed with the norm
$$ 
\left\Vert v\right\Vert _{C^{m}\left(\Gamma\right)}:= {\sum}_{\alpha\in\cA}{\sum}_{k\le m}\left\Vert \partial^{k}v_{\alpha}\right\Vert _{L^{\infty}\left(0,\ell_{\alpha}\right)}.
$$
We define
\begin{align*}
	&H^m(\G):=\left\{ v:\G \lra\R:\, v\in C\left(\Gamma\right)\, \text{and}\,v_{\alpha}\in  H^m\left((0,\ell_{\alpha})\right)\text{ for all }\alpha\in\cA\right\},\\
	&H^{m}_{b}(\Gamma):=\left\{v:\Gamma\lra\R:\,v_{\alpha}\in H^m((0,\ell_{\alpha}))\text{ for all }\alpha\in\cA\right\}.
\end{align*}
The previous spaces are endowed with the standard norm
$$
\left( \sum_{\alpha\in\cA}\Vert   v_{\alpha} \Vert _{H^m(0,\ell_\a)}^{2}\right)^{1/2}.
$$
We set $V=H^1(\G)$, $V'=H^{-1}(\G)$ and $\langle \cdot,\cdot\rangle_{V', V}$  for the corresponding pairing. We also set
\begin{align*}
&W:=\left\{w\in H_{b}^{1}\left(\Gamma\right):\,\frac{w|_{\Gamma_{\alpha}}(\nu_{j})}{\gamma_{j\alpha}}=\frac{w|_{\Gamma_{\beta}}(\nu_{j})}{\gamma_{j\beta}}\text{ for all }j\in I,\ \alpha,\beta\in\cA_{j}\right\}, \\
&PC:=\left\{v:[0,T]\times\G\lra \R:\,v|_{[0,T]\times\G_{\alpha}}\in C([0,T]\times\G_{\a})\, \text{ for all $\alpha\in \cA$}\right\}.	
\end{align*}
For a  function  $v$ either in  $W$ or in $PC$, for which continuity at the vertices is not required, we still denote with $v_\alpha$ the extension by continuity of $v_\alpha$ on the whole interval $[0,\ell_\alpha]$. \\
The couple $(\G,d_\G)$, where $d_\G$ is the geodesic distance on the network, is a metric space. We denote with $\cM$ the space of Borel probability measures on $\G$ endowed with the topology of weak convergence. For $1 \le p < \infty$, the $L^p$-Wasserstein distance $\Wass_p$ between $m_1,m_2 \in\cM$ is defined by the Monge-Kantorovich optimal transport problem
\begin{align*}
	\Wass_p(m_1,m_2)=\min_{\gamma\in\Pi(m_1,m_2)}\left\{\int_{\G\times\G}(d_\G(x,y))^p d\gamma(x,y) \right\},
\end{align*}
where  $\Pi(m_1,m_2)$ denotes the set of transport plans, i.e. Borel probability measures on $\G\times\G$ with marginals $m_1$ and $m_2$. Since $\G$ is compact, the Wasserstein distance $\Wass_p$ metrises the topology of weak convergence of probability measures on $\G$. In particular, for $p=1$, we have the dual formula
\begin{multline*}
	{\Wass}_1(m_1,m_2)\\
=\sup\left\{\int_\G f(x)d(m_1-m_2):\, f:\G\lra\R,\,|f(x)-f(y)|\le d_\G(x,y)\right\}.
\end{multline*}
Given a continuous, non negative function $c:\Gamma\times\Gamma \lra\R$ and $m_1,m_2\in \cM$, consider the Optimal Transport problem
\begin{equation*}
	 \cC(m_1, m_2)=\inf_{\gamma\in\Pi(m_1, m_2)}\int_{\Gamma\times\Gamma}c(x, y)d\gamma(x, y).
\end{equation*}
The $c$-{\it transform} of a function $\phi:\Gamma\lra\R\cup\{+\infty\}$ is defined by $\phi^c(y):=\inf_{x\in\Gamma}\{c(x, y)-\phi(x)\}$ and a  function $\psi:\Gamma\lra\R\cup\{+\infty\}$ is called $c$-{\it concave} if there exists   $\phi$ such that $\psi=\phi^c$. 
Since $\G$ is compact, by Kantorovich duality theorem (see e.g. \cite{santambrogio,mugnolo}), we have the following identity  
\begin{multline*}
\cC(m_1, m_2)\\
=\sup_{\phi, \psi\in C(\Gamma)}\left\{\int_{\Gamma}\phi dm_1+\int_{\Gamma}\psi dm_2:\phi(x)+\psi(y)\leq c(x, y),\ \forall x,y\in\Gamma\right\}.
\end{multline*}
Moreover, the supremum is attained by a maximizing pair of the form $(\phi, \psi)=(\phi, \phi^c)$, where $\phi$ is a $c$-concave function. A  maximizer $\phi$, which in general is not unique, is called a {\it Kantorovich potential}.\\
Now we state the assumptions which we will assume in the rest of the paper. We assume that the  initial distributions of the agents satisfy
\begin{equation}\label{initial_cond}	
m_0^\iota\in L^2(\Gamma)\cap\cM, \ \ m_0^\iota\geq\d>0, \ \ \int_{\Gamma}m_0^\iota(x)dx=1, \ i=1,2,
\end{equation}
for some $\d>0$. Moreover, we  assume that  the Hamiltonian $H_\a^\iota(\cdot, p)$, $\iota=1,2$, defined in \eqref{hamiltonian}, satisfies
\begin{equation}\label{hyp:H}
	\begin{cases}
			  (i) &\text{$H_\a^\iota\in C^1(\Gamma_{\a}\times\R)$},\\
 			 (ii) &\text{$H_\a^\iota(x, \cdot)$ is convex in $p$ for any $x\in\Gamma_{\a}$},\\
 			 (iii)& \text{$H_\a^\iota(x, p)\leq C_0(|p|+1)$ for any $(x, p)\in\Gamma_{\a}\times\R$},\\ 
 			  (iv)&  \text{$|\partial_pH_\a^\iota(x, p)|\leq C_0$ for any $(x, p)\in\Gamma_{\a}\times\R$,}\\
 			 (v)  & \text{$|\partial_xH_\a^\iota(x, p)|\leq C_0$ for any $(x, p)\in\Gamma_{\a}\times\R$,}
	\end{cases}
\end{equation}
for a constant $C_0$ independent of $\a$. We also assume that the viscosity and the Kirchhoff coefficients satisfy 
\begin{equation*} 
	\mu^\iota_\a>0,  \,\g^\iota_{j,\a}>0, \, \sum_{\alpha\in\cA_{j}}\gamma^\iota_{j\alpha}\mu^\iota_{\alpha}=1, \quad  \a\in \cA,\,j\in J,\, \iota=1,2;
\end{equation*}
 the commuting cost satisfies
\begin{equation}\label{comm_cost}
c\in C^1(\G\times \G);
\end{equation}\\
the coupling  costs $R^\iota$, $\iota=1,2$, are continuous and uniformly  bounded in $\cM\times\cM\times\G$ and
\begin{equation}\label{ip_coupling} 
\max_{\a\in\cA}\max_{x\in\G_\a}\left|R^\iota_\a[m_1, m_2](x)-R^\iota_\a[\eta_1,\eta_2](x)\right|\le L\max_{\iota=1,2}\Wass_1(m_\iota,\eta_\iota),
\end{equation}  
with $m_\iota,\,\eta_\iota\in \cM,$ $\iota=1,2$.

\begin{remark}
Conditions $(ii)$-$(v)$ in \eqref{hyp:H}  are related to the boundedness and
Lipschitz continuity of the dynamic and the running cost and   are satisfied by a standard control-type Hamiltonian. Condition $(i)$ can be ensured by taking, for example, a Hamiltonian quadratic in \(p\) near the origin and smoothly linearly extended for large $|p|$, e.g., $H_\alpha^\iota(x,p) = C_0(|p|+1)$ for $|p|$ sufficiently large. This choice guarantees that all conditions in \eqref{hyp:H} are satisfied.
\end{remark}

\begin{definition}
	\label{solmfg}
	A solution to the (MFGOT) problem   is given by two triples $(\phi_\iota, m_\iota, \th_\iota)$, $\iota=1,2$, such that 
	\begin{enumerate}
		\item[(i)] 
	$\phi_\iota\in L^2((0, T); H^2(\Gamma))\cap C([0, T]; H^1(\G))$, $\partial_t\phi_\iota\in L^2((0, T)\times\Gamma)$, $\phi_\iota(T, x)=0$ and	 
	\begin{align*}
		\int_{\Gamma}[-\partial_t\phi_\iota w+\mu^\iota\partial \phi_\iota\partial w+H^\iota(x, \partial \phi_\iota)w]dx=\int_{\Gamma}[R^\iota[m_1(t),m_2(t)]+\th_\iota]wdx,
	\end{align*}
for all $w\in W$, a.e. in $t\in(0, T)$;	
	\item[(ii)] $m_\iota\in L^2((0, T); W)\cap C([0, T]; L^2(\Gamma)\cap\cM)$, $\partial_tm_\iota\in L^2((0, T); V')$, \\
	$m_\iota(0, x)=m_0^\iota$ and 
 	\begin{align*}
	\langle \partial_tm_\iota, v\rangle_{V', V}+\int_{\Gamma}\mu^\iota\partial m_\iota\partial vdx+\int_{\Gamma}\partial_pH^\iota(x, \partial\phi_\iota)m_\iota\partial vdx=0
	\end{align*}
for all $v\in V$, a.e. in $t\in(0, T)$;	
\item[(iii)] for any $t\in [0,T]$, $(\th_1(t, \cdot), \th_2(t, \cdot))=(\th(t, \cdot), \th^c(t, \cdot))$ where   $\th(t, \cdot)$ is  a $c$-concave Kantorovich potential, i.e. 	 
	\begin{equation}\label{kantorovich_pot}
		 \cC(m_1(t), m_2(t))=\int_{\Gamma}\th(t, x)dm_1(t)(x)+\int_{\Gamma}\th^c(t, y)dm_2(t)(y),
	\end{equation}
such that $\int_\G \th(t, x)dx=0$.
\end{enumerate}
\end{definition}
We state an existence and uniqueness result for the MFG system.
\begin{theo}
	 There exists a  solution  to the {(MFGOT)} system. Moreover, if 
	 \begin{equation}
       \label{monotonecoupling}
	 \sum_{\iota=1}^2	\int_{\Gamma}(R^\iota[m_1,m_2]-R^\iota[\bm_1,\bm_2])(m_\iota-\bm_\iota)dx\ge 0,
	 \end{equation}
	 for any $(m_1,m_2)$, $(\bm_1,\bm_2)\in \cM\times\cM$, with the equality implying
	 $R^\iota[m_1,m_2]=R^\iota[\bm_1,\bm_2]$ for $\iota=1,2$,
	 then the solution is unique.
\end{theo}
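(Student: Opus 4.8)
The plan is to recast (MFGOT) as a fixed point problem on the pair of densities. First I would fix a convex set $\mathcal{K}\subset C([0,T];\cM)^2$ of time-dependent densities $(m_1,m_2)$ that are bounded in $L^\infty((0,T);L^2(\G))$, bounded below by a constant $\d'>0$ depending only on the data, of unit mass, and equi-Hölder in time for $\Wass_1$; since $\G$ is compact and $\cM$ is weakly compact, Arzel\`a--Ascoli makes $\mathcal{K}$ compact, and all the constraints are convex. Given $(m_1,m_2)\in\mathcal{K}$ I would define $\Phi(m_1,m_2)$ by the three steps underlying the system: (a) for a.e. $t$ solve the transport problem $\cC(m_1(t),m_2(t))$ and select the normalized Kantorovich pair $(\th_1(t,\cdot),\th_2(t,\cdot))=(\th(t,\cdot),\th^c(t,\cdot))$ with $\int_\G\th\,dx=0$; (b) solve the two backward Hamilton--Jacobi equations in $(i)$ with source $R^\iota[m_1,m_2]+\th_\iota$ to obtain $\phi_\iota$; (c) solve the two forward Fokker--Planck equations with drift $\partial_pH^\iota(x,\partial\phi_\iota)$ to obtain the new densities $(\tilde m_1,\tilde m_2)$. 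Using the well-posedness and a priori estimates for the Hamilton--Jacobi and Fokker--Planck equations on networks collected in Appendix \ref{app:A}, together with $|\partial_pH^\iota|\le C_0$ from \eqref{hyp:H}(iv), the boundedness of $R^\iota$, and the uniform Lipschitz bound on $c$-concave potentials inherited from $c\in C^1$ in \eqref{comm_cost}, I would check that source and drift are uniformly bounded, so that $\Phi$ maps $\mathcal{K}$ into itself for a suitable choice of the constants (unit mass and the positive lower bound being preserved by the Fokker--Planck flow through the maximum principle).

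\textbf{Continuity and the main obstacle.} The key difficulty is the continuity of step (a): Kantorovich potentials are not unique in general, so $\Phi$ need not be single-valued a priori. Here I would exploit that on $\mathcal{K}$ the marginals are bounded below by $\d'>0$, hence have full support on the connected network; together with the normalization $\int_\G\th\,dx=0$ this pins down the potential, while the uniform Lipschitz bound (from $c\in C^1$) gives equicontinuity, so that $m^n\to m$ in $C([0,T];\cM)$ forces $\th^n_\iota\to\th_\iota$ uniformly by Arzel\`a--Ascoli and identification of the limit. Combined with the Lipschitz dependence \eqref{ip_coupling} of $R^\iota$ on $\Wass_1$ and with the stability of the Hamilton--Jacobi and Fokker--Planck equations (yielding strong convergence of $\partial\phi^n_\iota$, hence of the drifts $\partial_pH^\iota(x,\partial\phi^n_\iota)$), this makes $\Phi$ continuous on $\mathcal{K}$, and Schauder's theorem produces a fixed point, i.e. a solution of (MFGOT). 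Should the potential selection fail to be single-valued, the same compactness makes $\Phi$ a closed-graph set-valued map with convex values and Kakutani--Fan--Glicksberg applies instead.

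\textbf{Uniqueness.} For the second part I would run the Lasry--Lions monotonicity argument adapted to the network. Let $(\phi_\iota,m_\iota,\th_\iota)$ and $(\bar\phi_\iota,\bm_\iota,\bar\th_\iota)$ be two solutions and set $\psi_\iota=\phi_\iota-\bar\phi_\iota$, $\rho_\iota=m_\iota-\bm_\iota$. Since $\rho_\iota\in W$ and $\psi_\iota\in V$, the transition conditions are already encoded in the test spaces of Definition \ref{solmfg}, so no manual vertex computation is needed: testing the difference of the Hamilton--Jacobi equations with $\rho_\iota$ and the difference of the Fokker--Planck equations with $\psi_\iota$, the symmetric diffusion form $\int_\G\mu^\iota\partial\psi_\iota\partial\rho_\iota$ appears with opposite signs and cancels. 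Integrating in $t\in(0,T)$ and using $\psi_\iota(T)=0$, $\rho_\iota(0)=0$, I would arrive at
\begin{align*}
0=&-\sum_{\iota}\int_0^T\!\int_\G\big(m_\iota B_\iota+\bm_\iota \bar B_\iota\big)\,dx\,dt-\int_0^T\sum_\iota\int_\G(R^\iota[m_1,m_2]-R^\iota[\bm_1,\bm_2])\rho_\iota\,dx\,dt\\
&-\int_0^T\sum_\iota\int_\G(\th_\iota-\bar\th_\iota)\rho_\iota\,dx\,dt,
\end{align*}
where $B_\iota=H^\iota(x,\partial\bar\phi_\iota)-H^\iota(x,\partial\phi_\iota)-\partial_pH^\iota(x,\partial\phi_\iota)(\partial\bar\phi_\iota-\partial\phi_\iota)\ge0$ and symmetrically $\bar B_\iota\ge0$ are the Bregman remainders of the convex $H^\iota$.

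\textbf{Concluding uniqueness.} The three terms are separately non-positive: the first by convexity of $H^\iota$ and $m_\iota,\bm_\iota\ge0$; the second by the monotonicity hypothesis \eqref{monotonecoupling}; the third because the Kantorovich pairs are optimal. For this last point I would use that $(\bar\th_1,\bar\th_2)$ is admissible, hence suboptimal, in the dual problem for $(m_1,m_2)$, and vice versa, which gives $\int_\G(\th_1-\bar\th_1)dm_1+\int_\G(\th_2-\bar\th_2)dm_2\ge0$ and $\int_\G(\th_1-\bar\th_1)d\bm_1+\int_\G(\th_2-\bar\th_2)d\bm_2\le0$; subtracting yields $\sum_\iota\int_\G(\th_\iota-\bar\th_\iota)\rho_\iota\,dx\ge0$. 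Since the total sum vanishes, each term is zero. From the first, as $m_\iota,\bm_\iota\ge\d'>0$, the remainders vanish a.e.; the elementary fact that a $C^1$ convex function which equals its tangent at two points is affine in between then forces $\partial_pH^\iota(x,\partial\phi_\iota)=\partial_pH^\iota(x,\partial\bar\phi_\iota)$, so the two Fokker--Planck drifts coincide and strict convexity of $H^\iota$ is not needed. Hence $m_\iota$ and $\bm_\iota$ solve the same Fokker--Planck problem, and its uniqueness (Appendix \ref{app:A}) gives $m_\iota=\bm_\iota$; with equal marginals of full support the normalized Kantorovich potentials coincide, $\th_\iota=\bar\th_\iota$, so the two Hamilton--Jacobi equations share source and terminal data and $\phi_\iota=\bar\phi_\iota$. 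The main obstacles I anticipate are the continuity of the transport-potential step in the existence part and, in the uniqueness part, establishing the monotonicity of the transport coupling and extracting equality of the drifts from mere convexity of $H^\iota$.
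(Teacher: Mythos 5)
Your proposal is correct, and both halves follow the paper's strategy: existence is obtained, exactly as in the paper, by a Schauder fixed point on a convex compact set of $\Wass_1$-H\"older curves of measures bounded below by $\delta_0$, with the map defined by your steps (a)--(c), single-valuedness of the potential step secured by full support plus the zero-mean normalization (Proposition \ref{optimaltransportnetwork2}), and continuity from Propositions \ref{prop:stability_pot}, \ref{eq_HJ} and \ref{lemmapreserv}; uniqueness is the same Lasry--Lions duality computation, including the identical cross-suboptimality argument for the transport term. The one genuine divergence is how uniqueness is closed. After all terms in the identity \eqref{uniqueness_sum} are shown to vanish, the paper invokes the equality clause of hypothesis \eqref{monotonecoupling} to deduce $R^1,R^2$ coincide for the two solutions and from there identifies densities and potentials (a step the paper treats rather tersely). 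You instead exploit the vanishing of the Bregman remainders: since any solution of the Fokker--Planck equation satisfies $m_\iota,\bm_\iota\ge\delta_0>0$ (Proposition \ref{lemmapreserv}), the remainders vanish a.e., and a $C^1$ convex function that touches its tangent line at a second point is affine in between, so its derivative is constant there and the two drifts $\partial_pH^\iota(x,\partial\phi_\iota)$, $\partial_pH^\iota(x,\partial\bar\phi_\iota)$ coincide a.e.; Fokker--Planck uniqueness then gives $m_\iota=\bm_\iota$, uniqueness of normalized potentials for fully supported marginals gives $\th_\iota=\bar\th_\iota$, and Hamilton--Jacobi uniqueness gives $\phi_\iota=\bar\phi_\iota$. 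What this buys is twofold: your argument never uses the clause ``equality implies $R^\iota[m_1,m_2]=R^\iota[\bm_1,\bm_2]$'', so it proves uniqueness under mere monotonicity of the coupling, and it makes explicit the passage from the vanishing of the integral terms to equality of the solutions, where your write-up of the convexity terms is also the clean one (the corresponding terms in \eqref{uniqueness_sum} appear to carry sign typos). Your Kakutani fallback in the existence part is unnecessary for the same reason you state: the lower bound forces full support, so the normalized potential is unique and the map is single-valued.
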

\begin{proof}
\ 
\par\smallskip\noindent
	{\it Existence.}	
	Set
	$$
	X=\left\{m\in C([0, T], \cM):\Wass_1(m(t), m(s))\leq {\bar C}|t-s|^{\frac{1}{2}},\ m(t)\geq\d_0>0\right\},
	$$
	where $\d_0$ is as in \eqref{initial_cond} and ${\bar C}$ will be fixed later.  Since $\G$ is bounded,  $X$ is a convex, compact subset of $C([0,T], \cM)$. Define a map $\cT:X^2\lra X^2$ in the following way:
	\begin{itemize}
\item[$(i)$] Given $(m_1, m_2)\in X^2$, let $\th_1, \th_2:[0, T]\times\Gamma\lra\R$ be such that	$(\th_1,\th_2)=(\th(t, \cdot), \th^c(t, \cdot))$ with the function $\th(t, \cdot)$ a $c$-concave Kantorovich potential for $ \cC(m_1(t), m_2(t))$ for $t\in [0,T]$, see \eqref{kantorovich_pot}, satisfying $\int_\G\th(t, x) dx=0$;\\  
\item[$(ii)$] For $(\th_1, \th_2)$ as before, let $\phi_\iota$ be the solutions of the HJ equations
	\begin{equation}
		\label{hj}
		\left\{
		\begin{array}{ll}
			-\partial_t\phi_\iota-\mu^\iota_{\a}\partial^2\phi_\iota+H_\a^\iota(x, \partial\phi_\iota)=&R^\iota[m_1,m_2]+\th_\iota,\\
			&(t, x)\in(0, T)\times(\Gamma_{\a}\setminus\cV),\ \a\in\cA,\\[6pt]
			\sum_{\a\in\cA_j}\gamma^\iota_{j\a}\mu^\iota_{\a}\partial_{\a}\phi_\iota(t, \nu_j)=0, & (t, \nu_j)\in(0,T)\in\cV,\\[6pt]
			\phi_\iota|_{\Gamma_{\a}}(t, \nu_j)=\phi_\iota|_{\Gamma_{\beta}}(t, \nu_j),&  \a,\b\in\cA_j,\ (t, \nu_j)\in(0,T)\times\cV,\\[6pt]
			\phi_\iota(T, x)=0,&  x\in\Gamma;
		\end{array}
		\right.
	\end{equation}
\item[$(iii)$] Set $(\eta_1, \eta_2)=\cT(m_1, m_2)$, 	where $\eta_\iota$, $\iota=1,2$, is the solution of
	\begin{equation}
		\label{fp}
		{\small
	\left\{
			\begin{array}{ll}
			\partial_t\eta_\iota-\mu^\iota_{\a}\partial^2\eta_\iota-\partial(\eta_\iota\partial_pH_\a^\iota(x, \partial\phi_\iota))=0,&\\
			&(t, x)\in(0, T)\times(\Gamma_{\a}\setminus\cV),\ \a\in\cA,\\[6pt]
			\sum_{\a\in\cA_j}\mu^\iota_{\a}\partial_{\a}\eta_\iota(t, \nu_j)+n_{j\a}\partial_{p}H_\a^\iota \left(\nu_j, \partial\phi_\iota(t, \nu_j)\right)&\eta_\iota|_{\Gamma_{\a}}(t, \nu_j)=0,\\
			&(t, \nu_j)\in(0,T)\times\cV,\\[6pt]
			\dfrac{\eta_\iota|_{\Gamma_{\a}}(t, \nu_j)}{\gamma^\iota_{j\a}}=\dfrac{\eta_\iota|_{\Gamma_\beta}(t, \nu_j)}{\gamma^\iota_{j\beta}},&\a, \b\in\cA,\ (t, \nu_j)\in(0,T)\times\cV,\\[6pt]
			\eta_\iota(0, x)=m_0^\iota,&x\in\Gamma.
\end{array}
		\right.}
	\end{equation}
	\end{itemize}
We prove, by Schauder Fixed-Point Theorem (see e.g. \cite{gilbarg}, Corollary 11.2), that the  map $\cT$ has a fixed point and this gives a solution of the {(MFGOT)} system.\\
First observe that the function $\cT$ is well-defined. Indeed, given $(m_1, m_2)\in X^2$, by Proposition \ref{optimaltransportnetwork2}, \cite[Theorem 3.1]{achdou} and the normalization condition $\int_\G\th(t, x) dx=0$, the couple $(\th_1,\th_2)=(\th(t, \cdot), \th^c(t, \cdot))$ is uniquely defined for any $t\in (0,T)$. Moreover, by Proposition \ref{normkantorovich} and \ref{prop:stability_pot}, the functions $\th_\iota$ are continuous and uniformly bounded in $(0,T)\times\G$ and therefore $\th_\iota\in L^2((0,T)\times\G)$. Furthermore,  by \eqref{ip_coupling}, also $R^\iota\in L^2((0,T)\times\G)$ and therefore by \cite[Theorem 4.1]{achdou} it follows that there exists a  unique solution $\phi_\iota$, $\iota=1,2$ to \eqref{hj} in the sense of Definition \ref{solmfg}.$(i)$. Moreover, by \cite[Theorem 3.1]{achdou}, there exists a unique solution $\eta_\iota$, $\iota=1,2$, which solves \eqref{fp}    in the sense of Definition \eqref{solmfg}.$(ii)$. Since $\eta_\iota$ can be identified with the corresponding Borel measure with density $\eta_\iota(t)$ on $\Gamma$ at time $t$, by \cite[Theorem 3.1]{achdou} and \cite[Proposition 4.3]{camillimarchi}, choosing ${\bar C}$ equal to $C_{W}=C_{W}(\|\partial_pH\|_{L^{\infty}})$ in \cite[Proposition 4.3]{camillimarchi}, we have that $\cT$ maps $X$ into itself.\par
We prove that $\cT$ is continuous. Given $m_n=(m_{n,1},m_{n,2})\in X^2$  and $m=(m_1,m_2)\in X^2$ such that $\Wass_1(m_{n,\iota},m_\iota)\lra 0$ for $n\to \infty$, $\iota=1,2$,
let $(\th_{n,1},\th_{n,2})=(\th_n, \th_n^c)$ and $(\th_1,\th_2)=(\th,\th^c)$, where, for any $t\in (0,T)$, $\th_n$  and $\th$ are the  Kantorovich potentials corresponding to $ \cC(m_{n,1}(t), m_{n,2}(t))$ and $ \cC(m_1(t), m_2(t))$, renormalized in such a way that $\int_\G\th_n dx=\int_\G\th dx=0$ (recall that $\th_n$ and $\th$ are uniquely defined). Then let $\phi_{n,\iota}$ and $\phi_\iota$, $\iota=1,2$, be the solutions of the HJ equations \eqref{hj} with right-hand side $R^\iota[m_{n,1},m_{n,2}]+\th_{n,\iota}$ and $R^\iota[m_1,m_2]+\th_\iota$, respectively. Finally, set $(\eta_{n,1}, \eta_{n,2})=\cT(m_{n,1}, m_{n,2})$, $(\eta_1, \eta_2)=\cT(m_1, m_2)$. 
Since $m_{n,\iota}\lra m_\iota$ in $X$, then $\th_{n,\iota}(t, x)$ converges to $\th_\iota(t, x)$, uniformly in $x\in\G$ for any $t\in (0,T)$, see Proposition \ref{prop:stability_pot}. Moreover 
$\th_{n,\iota}$, $\th_\iota$ are uniformly bounded in $[0,T]\times\G$ and therefore $\th_{n,\iota}\lra \th_\iota$ in $L^2((0,T)\times\G)$. By \eqref{ip_coupling}, we also have  $R^\iota[m_{n,1}, m_{n,2}] \lra R^\iota[m_1, m_2] $  in $L^2((0, T)\times\Gamma)$. \\
By \cite[Lemma 4.1]{achdou}, we have that the sequence $\phi_{n,\iota}$ converges to $\phi_\iota$ in the space $L^2((0, T); H^2(\Gamma))\cap C([0, T]; V)\cap W^{1, 2}((0, T); L^2(\Gamma))$ . Therefore, \cite[Theorem 3.1]{achdou} with $b_n=\partial_pH^\iota(x, \partial\phi_{n,\iota})$ and  $b=\partial_pH^\iota(x, \partial\phi_\iota)$ implies that $(\eta_\iota)_n$ converges to $\eta_\iota$ in $L^\infty\left((0,T);L^2(\Gamma)\right)$, and then 
$$\Wass_1((\eta_\iota)_n, \eta_\iota)\lra 0\quad \text{for $n\to \infty$, $\iota=1,2$}.$$ Hence $\cT$ is continuous. Since $X^2$ is a convex and compact set, by Schauder Theorem the map $\cT$ has a fixed point, and hence the {(MFGOT)} system admits a solution.

\vskip 12pt
	{\it Uniqueness:}  We assume that there exist two solutions $(\phi_\iota, m_\iota, \theta_\iota)$ and  $(\psi_\iota, \eta_\iota, \rho_\iota)$, $\iota=1, 2$, of {(MFGOT)}. We set $\bar\phi_\iota=\phi_\iota-\psi_\iota$, $\bar m_\iota=m_\iota-\eta_\iota$ and $\bar \theta_\iota=\theta_\iota-\rho_\iota$ and we write the conditions $(i)$, $(ii)$ and $(iii)$ of {(MFGOT)} for $\bar\phi_\iota$, $\bar m_\iota$, $\bar \theta_\iota$:\\
$\bullet$ for $(t, x)\in (0, T)\times(\Gamma_{\a}\setminus\cV),\ \a\in\cA$, $\iota=1,2$,
$$-\partial_t\bar\phi_\iota-\mu^\iota_{\a}\partial^2\bar\phi_\iota+H^\iota(x, \partial\phi_\iota)-H^\iota(x, \partial\psi_\iota)-(R^\iota[m_1, m_2]-R^\iota[\eta_1, \eta_2])- \bar\theta_\iota=0, $$
$$\partial_t\bar m_\iota-\mu^\iota_{\a}\partial^2\bar m_\iota-\partial(m_\iota\partial_pH^\iota(x, \partial\phi_\iota)-\eta_\iota\partial_pH^\iota(x, \partial\psi_\iota))=0;$$
$\bullet$ for $(t, \nu_j)\in (0, T)\times\cV$, $\a,\b\in\cA$, $\iota=1,2$,
$$ \sum_{\a\in\cA_j}\gamma^\iota_{j\a}\mu^\iota_{\a}\partial_{\a}\bar\phi_\iota(t, \nu_j)=0,$$
\begin{multline*}
\sum_{\a\in\cA_j}n_{j\a}\left[m_\iota|_{\Gamma_{\a}}(t, \nu_j)\partial_{p}H_\a^\iota\left(\nu_j, \partial\phi_\iota|_{\Gamma_{\a}}(t, \nu_j)\right)\right.\\
\left.-\eta_\iota|_{\Gamma_{\a}}(t, \nu_j)\partial_pH_\a^\iota\left(\nu_j, \partial\psi_\iota|_{\Gamma_{\a}}(t, \nu_j)\right)\right]+\sum_{\a\in\cA_j}\mu^\iota_{\a}\partial_{\a}\bar m_\iota(t, \nu_j)=0,
  \end{multline*}
$$\bar\phi_\iota|_{\Gamma_{\a}}(t, \nu_j)=\bar\phi_\iota|_{\Gamma_{\beta}}(t, \nu_j), \,\frac{\bar m_\iota|_{\Gamma_{\a}}(t, \nu_j)}{\gamma^\iota_{j\a}}=\frac{\bar m_\iota|_{\Gamma_\beta}(t, \nu_j)}{\gamma^\iota_{j\beta}}\, ,$$
 $\bullet$ for $x\in\Gamma$, $\iota=1,2$,
\begin{align*}
	\bar\phi_\iota(T, x)=0, \ \ \bar m_\iota(0, x)=0,\quad x\in\G;
\end{align*}
$\bullet$ for any $t\in [0,T]$,
\begin{align*} 
	 \cC(m_1(t), m_2(t))=\int_{\Gamma}\th_1(t, x)dm_1(t)(x)+\int_{\Gamma}\th_2(t, y)dm_2(t)(y) ,\\
	 \cC(\eta_1(t), \eta_2(t))=\int_{\Gamma}\rho_1(t, x)dm_1(t)(x)+\int_{\Gamma}\rho_2(t, y)dm_2(t)(y).
\end{align*}
\par
{ Multiply by $\bar m_\iota\in W$ the PDE satisfied by $\bar\phi_\iota$, i.e.,
$$
-\partial_t\bar\phi_\iota-\mu^\iota_{\a}\partial^2\bar\phi_\iota+H^\iota(x, \partial\phi_\iota)-H^\iota(x, \partial\psi_\iota)-(R^\iota[m_1, m_2]-R^\iota[\eta_1, \eta_2])- \bar\theta_\iota=0,
$$
by $\bar\phi_\iota\in V$ the one satisfied by $\bar m_\iota$, i.e.,
$$
\partial_t\bar m_\iota-\mu^\iota_{\a}\partial^2\bar m_\iota-\partial(m_\iota\partial_pH^\iota(x, \partial\phi_\iota)-\eta_\iota\partial_pH^\iota(x, \partial\psi_\iota))=0,
$$
subtract the resulting equations and sum for $\iota=1,2$. Integrating the resulting identity in $\Gamma\times[0, T]$ and exploiting the conditions at the vertices, we obtain}
\begin{multline}
\label{uniqueness_sum}
\sum_{\iota=1}^2\left[\int_0^T\int_{\Gamma}(R^\iota[m_1, m_2]-R^\iota[\eta_1, \eta_2])(m_\iota-\eta_\iota)dxdt\right.\\
\left.+\int_0^T\int_{\Gamma}(\theta_\iota-\rho_\iota)(m_\iota-\eta_\iota)dxdt+\int_0^T\int_{\Gamma}\partial_t(\bar m_\iota\bar\phi_\iota)dxdt\right. \\
\left.+\sum_{\a\in\cA_j}\int_0^T\int_{\Gamma_{\a}}m_\iota[H^\iota(x, \partial\psi_\iota)-H^\iota(x, \partial\phi_\iota)-\partial_pH^\iota(x, \partial\psi_\iota)\partial\bar\phi_\iota]dxdt\right.\\
\left.+\sum_{\a\in\cA_j}\int_0^T\int_{\Gamma_{\a}}\eta_\iota[H^\iota(x, \partial\phi_\iota)-H^\iota(x, \partial\psi_\iota)+\partial_pH^\iota(x, \partial\phi_\iota)\partial\bar\phi_\iota]dxdt\right]=0.
\end{multline}
We claim that all the terms in left hand side of the previous identity are non negative. By \eqref{monotonecoupling}, the first term is non negative. We show that second term is non negative, i.e.
\begin{equation}
\label{monotonecoupling2}
\sum_{\iota=1}^2\int_0^T\int_{\Gamma}(\theta_\iota-\rho_\iota)(dm_\iota(t)-d\eta_\iota(t))dt\geq0.
\end{equation}
Observe that $(\theta_1(t, \cdot), \theta_2(t, \cdot))=(\th(t, \cdot), \th^c(t, \cdot))$ and also 
$(\rho_1(t, \cdot), \rho_2(t, \cdot))=(\rho(t, \cdot), \rho^c(t, \cdot))$, 
 beside  being   Kantorovich potential for $ \cC(m_1(t), m_2(t))$ and respectively $ \cC(\eta_1(t), \eta_2(t))$, satisfy
\begin{align*}
	 \cC(\eta_1(t), \eta_2(t))\ge \int_{\Gamma}\th(t, x)dm_1(t)(x)+\int_{\Gamma}\th^c(t, y)dm_2(t)(y),
 \\
	 \cC(m_1(t), m_2(t))\ge \int_{\Gamma}\rho(t, x)dm_1(t)(x)+\int_{\Gamma}\rho^c(t, y)dm_2(t)(y).
\end{align*}
Hence, for any $t\in (0,T)$,
\begin{align*}
&\int_{\Gamma}(\theta_1-\rho_1)(dm_1(t)-d\eta_1(t))(x)+\int_{\Gamma}(\theta_2-\rho_2)(dm_2(t)-d\eta_2(t))(y)\\
&= \cC(m_1(t), m_2(t))+ \cC(\eta_1(t), \eta_2(t))\\
&-\left(\int_{\Gamma}\theta_1d\eta_1(t)(x)+\int_{\Gamma}\theta_2d\eta_2(t)(y)+\int_{\Gamma}\rho_1dm_1(t)(x)+\int_{\Gamma}\rho_2dm_2(t)(y) \right)\\
&\geq  \cC(m_1(t), m_2(t))+ \cC(\eta_1(t), \eta_2(t))- \cC(\eta_1(t), \eta_2(t))- \cC(m_1(t), m_2(t))=0.
\end{align*}
Moreover
$$
\int_0^T\int_{\Gamma}\partial_t(\bar m_\iota\bar\phi_\iota)dxdt=\int_{\Gamma}[\bar m_\iota(T, x)\bar\phi_\iota(T, x)-\bar m_\iota(0, x)\bar\phi_\iota(0, x)]dx=0,
$$
since $\bar\phi_\iota(T, x)=0$ and $\bar m_\iota(0, x)=0$. Finally, since $H^\iota$ is convex and $m_\iota$, $\eta_\iota$ are non negative, the last two terms in \eqref{uniqueness_sum} are non negative. Therefore, the claim is proved and all the terms in the identity must vanish.\\
By \eqref{monotonecoupling2}, we obtain $R^\iota[m_1, m_2]=R^\iota[\eta_1, \eta_2]$ and therefore, by \eqref{monotonecoupling}, $(m_1, m_2)=(\eta_1, \eta_2)$ and also  $\theta_\iota=\rho_\iota$, $\iota=1,2$. Finally, by \cite[Theorem 4.1]{achdou}, we get $\phi_\iota=\psi_\iota$, $\iota=1, 2$.
\end{proof}
\begin{remark}
The assumption $m_0^\iota\ge\d>0$ in \eqref{initial_cond} is necessary to have that the support of $m_\iota(t)$, $t\in (0,T)$, coincides with the network $\G$ in order to guarantee the uniqueness, up to renormalization, of the Kantorovich potential corresponding to $ \cC(m_1(t),m_2(t))$. Indeed, by Proposition \ref{optimaltransportnetwork2}, this assumption can be relaxed assuming only that  the initial distribution of  one of the two populations, for example the  workers, is supported in the whole $\G$.	
\end{remark}

\section{Approximation and Numerical Experiments}\label{sec:discretization}

\subsection[Num. approx. of H-J and F-P eq. on a network via semi-Lagrangian methods]{Numerical approximation of Hamilton-Jacobi and Fokker-Planck e\-qua\-tions on a network via semi-Lagrangian methods}
In this section we describe the numerical schemes   for approximating the Ha\-mil\-ton-Jacobi and Fokker-Planck equations on a network. The schemes have been introduced, in the Euclidean case, in various works as \cite{CS17,carlini2016DGA}. The adaptation to networks, following the same principles used in \cite{carlini2020,camilliFesta}, has various non-trivial aspects. {Among the most interesting aspects of the proposed schemes is the extension of a semi-Lagrangian scheme on networks to the diffusive (i.e., second-order) case. We emphasize that this case is not covered by the aforementioned literature, and although it would be very interesting to investigate its numerical properties in terms of consistency and convergence in a possible future work, it has proven in practice to be an effective tool for approximating the solution, with the additional advantage of not requiring a restrictive condition of the temporal/spatial discretization step.}

Fixed $\Delta t,\,\Delta x>0$, we  consider a discretization $\Gamma_{\Delta x}$ of the network $\Gamma$ defined in the following way: given an edge $\Gamma_\alpha$   connecting vertices $\nu_i$, $\nu_j$, we consider the points
$$
	\pi_\alpha(y_k)=[y_k\nu_j+(\ell_\alpha-y_k)\nu_i]\ell_\alpha^{-1},\quad\text{for } y_k\in\{0=y_1,...,y_{N_\alpha}=\ell_\alpha\},
$$
where $y_{k+1}-y_k\leq\Delta x$.  We also define the unit vector $e_\alpha=(\nu_j-\nu_i)/\ell_\alpha$ and
$p^\iota_{j\alpha}$ as in   \eqref{coeff2}. We denote with $N=\sum_{\alpha\in \cA}N_\alpha$ the total number of discretization points in the network. 

\paragraph{Approximation of the Trajectories.}
First, we describe the approximation of the dynamics of the optimal control problems. A numerical version of the stochastic differential equations giving the dynamics of the agents is  obtained using the Euler-Maruyama scheme, taking care   that the discrete trajectories can exit the edge where they start  through a vertex and enter one of the adjacent edges. For the population $\iota\in\{1,2\}$, assuming   to know the optimal feedback map $u^\iota_\alpha:\Gamma\rightarrow [\underline {u^\iota_\alpha}, \overline {u^\iota_\alpha}]$ and that $X^\iota_s$ belongs to the edge $\Gamma_\alpha$ connecting the vertices $\nu_i$ and $\nu_j$, we set  
$$Y^\iota_{s+1}=X^\iota_s+\Delta t \, u^\iota_\alpha(X^\iota_s)e_\alpha,$$
and we consider the following cases
\begin{enumerate}
	\item
	If $Y^\iota_{s+1}\in\Gamma_\alpha$ and 
	\begin{itemize}
	\item[i)]   $\min\{|Y^\iota_{s+1}-\nu_i|,|Y^\iota_{s+1}-\nu_j|\} > \sqrt{2\Delta t \mu^\iota_\alpha}$, we define 
	$$ X^\iota_{s+1}=Y^\iota_{s+1}+(-1)^r \sqrt{2\Delta t \mu^\iota_\alpha}\,e_\alpha,$$
	with $r$ a random variable uniformly distributed in $\{0,1\}$.
	\item[ii)] If $|Y^\iota_{s+1}-\nu_j|< \sqrt{2\Delta t \mu^\iota_\alpha}$ (similarly if $|Y^\iota_{s+1}-\nu_i|< \sqrt{2\Delta t \mu^\iota_\alpha}$ ), we set $t^*=\inf\{t\in (0,\Delta t):\,Y^\iota_{s+1}+\sqrt{2t \mu^\iota_\alpha}e_\alpha n_{j\alpha}=\nu_j \}$, where	 $n_{j\alpha}=1$ if  $\nu_j =\pi_{\alpha} (\ell_\alpha)$ 
	and $n_{j\alpha}=-1$ if $\nu_j =    \pi_{\alpha} (0)$, and define
	$$
	 X^\iota_{s+1}=
	 \left\{ 
	 \begin{array}{ll}
	 	Y^\iota_{s+1}-\sqrt{2\Delta t \mu^\iota_\gamma}e_\alpha n_{j\alpha}& \hbox{with probability (w.p.) $1/2$},\\[4pt]
	 	\nu_j-\sqrt{2(\Delta t -t^*) \mu^\iota_\beta}\,e_\beta n_{j\beta}\quad &\hbox{w.p. $p^\iota_{j\beta}/2$, $\beta\in \cA_j$}.
	 \end{array}
	 \right.
	 	$$
	\end{itemize}
	\item
	If $Y^\iota_{s+1}\not\in\Gamma_\alpha$, set  $t^*=\inf\{t\in (0,\Delta t):\,X^\iota_s+t u^\iota_\alpha(X^\iota_s)e_\alpha\in\cV\}$ with $X^\iota_s+t^* u^\iota_\alpha(X^\iota_s)e_\alpha=\nu_j$ (similarly for $\nu_i$). We  assume that $X^\iota_{s+1}\in \Gamma_\beta$, $\beta \in \cA_j$, with probability $p^\iota_{j\beta}$ and, given $X^\iota_{s+1}\in \Gamma_\beta$,  we distinguish the following case
	\begin{itemize}
		\item[i)]  If $|(\Delta t-t^*) \, u_\beta(\nu_j)| > \sqrt{2(\Delta t-t^*) \mu^\iota_\beta}$, then we set
		$$ X^\iota_{s+1}= \nu_j+ (\Delta t-t^*)\, u_\beta(\nu_j)e_\beta+(-1)^r \sqrt{2(\Delta t-t^*) \mu^\iota_\beta}e_\beta, $$
		with $r$ a random variable uniformly distributed $\{0,1\}$.
		\item[ii)] If $|(\Delta t-t^*) \, u_\beta(\nu_j)| < \sqrt{2(\Delta t-t^*) \mu^\iota_\beta}$, we set 
		$$
		X^\iota_{s+1}=
		\left\{ 
		\begin{array}{ll}
			\nu_j+ (\Delta t-t^*)\, u_\beta(\nu_j)e_\beta-\sqrt{2(\Delta t-t^*) \mu^\iota_\beta}e_\beta n_{j\beta}& \hbox{w.p. $1/2$},\\[4pt]
			\nu_j-\sqrt{2(\Delta t -t^*) \mu^\iota_{\bar\beta}}\,e_{\bar\beta} n_{j{\bar\beta}} &\hbox{w.p. $p^\iota_{j{\bar\beta}}/2$,}\\
			& \hbox{ ${\bar\beta}\in \cA_j$}.
		\end{array}
		\right.
		$$
	\end{itemize}
\end{enumerate}

We write the description above in a compact form as 
$$ X^\iota_{s+1}=\Psi(X^\iota_s,u,\Delta t), $$
with the sense that the mean position of an agent is
$$ \mathbb E (X^\iota_{s+1})=\sum_{c=1}^{N^\iota_c} p^\iota_c \Psi_c(X^\iota_s,u,\Delta t), $$
with $p^\iota_c$ the probability of being in the sub-case $\Psi_c(X^\iota_s,u,\Delta t)$, and $N^\iota_c$ being the total number of possible positions assumed by the agent at time $s+1$. 

\paragraph{A Semi-Lagrangian Method for the Hamilton-Jacobi and the Fokker-Planck Equation.}
In the semi-Lagrangian method for a Hamilton-Jacobi equation we use a direct computation of all the possible trajectories and we optimize on the control the local expected value of the problem. It means that, calling $\Phi_\iota:\Gamma_{\Delta x}\times \{t_0,...,t_T\}$ the discrete approximation of $\phi_\iota$ such that $\Phi_\iota^{j,n}$ is relative to the population $\iota$ at the point $y=j\Delta x$ at time $t_n=n\,\Delta t$, the numerical scheme can be described as 
\begin{equation}\label{hj:scheme}
\Phi^{j,n+1}_\iota=\sum_{c=1}^{N_c} p_c \left(\max_{u^\iota} \mathbb I[\Phi_\iota^{n}](\Psi_c(j\Delta x,u,\Delta t))-L^\iota(j\Delta x,u)-\theta_\iota^{n,j}\right).
\end{equation}
Here, $\mathbb I$ is an opportune interpolation operator which allow us to evaluate the function $\Phi^{n}_\iota$ also in points not following exactly in the discrete network $\Gamma_{\Delta x}$, $L^\iota$ is the running cost of the population $\iota$ while $\theta_\iota$ is the Kantorovich potential of the optimal transportation problem whose calculation we will address below. 

We build the relative scheme for the Fokker-Planck equation taking the adjoint of the previous equation,  as described for the Euclidean case in \cite{CS17}. In this scheme, we will use the optimal feedback map $u^\iota_*$ found in \eqref{hj:scheme}. Calling $M:\Gamma_{\Delta x}\times \{t_0,...,t_T\}\rightarrow [0,+\infty)$ the discrete approximation of $m^\iota$ we have that 
\begin{equation*}
M_\iota^{j,n+1}=\sum_{c=1}^{N_c} p_c \sum_{k}\beta_{k}(\Psi_c(j\Delta x,u,\Delta t)) M_\iota^{k,n}.
\end{equation*}
Here, the $\beta_{k}$ are the basis of the polynomial approximation used in the interpolation operator $\mathbb I[\Phi](x)$ introduced above. A theoretical study of the introduced approximation scheme will not be carried out in this work, but postponed to a future work.\\

\subsection{Approximation of an Optimal Transportation Problem on a Network via Entropic Regularization}
We consider the discrete optimal transportation problem on a network, aiming to find the optimal way to move the  configuration of workers \(M^1:\Gamma_{\Delta x} \rightarrow [0,+\infty)\) to the one of firms \(M^2:\Gamma_{\Delta x} \rightarrow [0,+\infty)\), see \eqref{OT_primal}. We seek the optimal transfer plan (a matrix \(\pi\) such that \(\sum_{j}\pi_{i,j}=M^1_i\) and \(\sum_{i}\pi_{i,j}=M^2_j\)) that minimizes a given transfer cost \(c_{i,j}:\Gamma_{\Delta x} \times \Gamma_{\Delta x} \rightarrow [0,+\infty)\). Formally, the problem is defined as:

\begin{equation}\label{disc:opttransp}
\left\{
\begin{array}{l}
\min_{\pi \in \mathbb{R}^{N \times N}} \sum_{i,j} c_{i,j} \pi_{i,j} ,\\
\pi_{i,j} \geq 0, \quad \forall i,j \in \{1,\ldots,N\}, \\
\sum_{j} \pi_{i,j} = M^1_i, \quad \forall i \in \{1,\ldots,N\}, \\
\sum_{i} \pi_{i,j} = M^2_j, \quad \forall j \in \{1,\ldots,N\}.
\end{array}
\right.
\end{equation}

It is well-known that an optimal transport problem is related to its dual representation in the form of a Monge-Kantorovich problem, which also holds in the discrete setting \cite{solomon}. Therefore, solving problem \eqref{disc:opttransp} is equivalent to finding two discrete potentials \(\theta^1_i, \theta^2_i:\Gamma_{\Delta x} \rightarrow (-\infty,+\infty)\) such that

\[
\max_{\theta^1, \theta^2} \sum_{i=1}^N \left( \theta^1_i M^1_i + \theta^2_i M^2_i \right) \quad \text{subject to} \quad \theta^1_i + \theta^2_j \leq c_{i,j}.
\]
Recall that the Kantorovich   potential  need to be computed at  any time $t\in [0,T]$, see \eqref{kantorovich_pot}. Due to the large number of optimal transport problems that must be solved, classical numerical techniques like the fluid mechanics approach \cite{BB00} are not the most appropriate for our case. Instead, we accelerate the process using entropic regularization, a technique introduced by the machine learning community in \cite{Cuturi}. In this approach, we solve an entropic regularized version of the problem:
$$
\left\{
\begin{array}{l}
\min_{\pi \in \mathbb{R}^{N \times N}} \sum_{i,j} \left( \pi_{i,j} c_{i,j} + \sigma \pi_{i,j} \log \pi_{i,j} \right), \\
\sum_{j} \pi_{i,j} = M^1_i, \quad \forall i \in \{1,\ldots,N\}, \\
\sum_{i} \pi_{i,j} = M^2_j, \quad \forall j \in \{1,\ldots,N\},
\end{array}
\right.
$$
for a fixed parameter \(\sigma > 0\). Note that the constraint  \(\pi_{i,j} \geq 0\)  is implicitly satisfied because \(\log \pi_{i,j}\) in the objective function prevents negative \(\pi\) values. As \(\sigma \rightarrow 0^+\), we recover the original problem \eqref{disc:opttransp}. The Lagrangian of the optimization problem becomes
\begin{multline*}
\mathcal{L}(\pi, \theta^1, \theta^2) = \sum_{i,j} \pi_{i,j} c_{i,j}\\
 + \sigma \sum_{i,j} \pi_{i,j} \log \pi_{i,j} + \sum_{i} \theta^1_i \left( M^1_i - \sum_j \pi_{i,j} \right) + \sum_{j} \theta^2_j \left( M^2_j - \sum_i \pi_{i,j} \right) \\
= \langle \pi, C \rangle + \sigma \langle \pi, \log \pi \rangle + (\theta^1)^T (M^1 - \pi \mathbbm{1}) + (\theta^2)^T (M^2 - \pi^T \mathbbm{1}),
\end{multline*}
where \(C\) is the matrix with elements \(c_{i,j}\), \(\langle \cdot, \cdot \rangle\) denotes the element-wise inner product of matrices, \(\log\) is taken element-wise, and \(\mathbbm{1}\) is the vector of all ones. By computing the optimality conditions, we obtain:
\[
0 = \nabla_{\pi} \mathcal{L} = C + \sigma \mathbbm{1} \mathbbm{1}^T + \sigma \log \pi - \theta^1 \mathbbm{1}^T - \mathbbm{1} (\theta^2)^T.
\]
Solving for \(\pi\):

\[
\log \pi = \frac{\theta^1 \mathbbm{1}^T}{\sigma} + \frac{\mathbbm{1} (\theta^2)^T}{\sigma} - \frac{C}{\sigma} + \log K_\sigma,
\]
where the matrix  \(K_\sigma = e^{-C/\sigma}\). Letting \(u = e^{\theta^1/\sigma}\) and \(v = e^{\theta^2/\sigma}\), we find:

\[
\pi = \text{diag}(u) K_\sigma \text{diag}(v),
\]
where \(\text{diag}(p)\) denotes the diagonal matrix with \(p\) on its diagonal. The existence of \(u\) and \(v\) follows from Sinkhorn's Theorem \cite{Sinkhorn}, which guarantees that for any matrix \(A\) with positive entries, there exist diagonal matrices \(D_1\) and \(D_2\) such that \(A = D_1 \pi D_2\), with \(\pi\) being a doubly stochastic matrix. This theorem suggests an iterative algorithm to find \(u\) and \(v\) as the fixed points of:

$$
\left\{
\begin{array}{l}
u^{k+1} = \frac{M^1}{K_\sigma v^k}, \\
v^{k+1} = \frac{M^2}{K_\sigma^T u^{k+1}},
\end{array}
\right.
$$
where the divisions are element-wise. This algorithm alternately updates \(u\) and \(v\), converging asymptotically to the optimal \(\pi\) at an efficient rate, regardless of the initial guess. Once convergence to a pair \((\bar{u}, \bar{v})\) within a certain tolerance is reached, the potentials \(\theta^1\) and \(\theta^2\) can be obtained as:
$$
\theta^1 = \sigma \log \bar{u}, \quad \theta^2 = \sigma \log \bar{v}.
$$

\subsection{Numerical Experiments}
We develop a collection of tests by considering specific coupling costs inspired from \cite{abc}. The model is relatively standard regarding the choice of Hamiltonians, but it includes a structure in the coupling costs $R^\iota$ that is appropriate for simulating the processes of aggregation and segregation between the two populations. More specifically:

\par\medskip\noindent
\textbf{Lagrangian:} We consider a Lagrangian given by
$$
L^\iota_\alpha(x, p) = \frac{\rho^\iota_\alpha}{2}|p|^2 + \Lambda^\iota_\alpha(x), \qquad \iota=1,2, \alpha \in \cA,
$$
hence the Hamiltonian is 
$$
H^\iota_\alpha(x, p) = \frac{1}{2\rho^\iota_\alpha}|p|^2 + \Lambda^\iota_\alpha(x).
$$
Here, $\rho^\iota$ represents the mobility cost, which are low for workers and high for firms, thus $\rho^1 < \rho^2$. The function $\Lambda^\iota$ represents the housing cost of a given area, which may depend on the speed of connections for both populations and the presence of parks, shopping centers, or other infrastructure for workers. Note that, even if our approach does not allow to consider a quadratic Hamiltonian $H^\iota$ since  it does not satisfy the  assumptions   \eqref{hyp:H}, for simplicity we consider it in the numerical experiments.

\par\medskip\noindent
\textbf{Coupling Cost:} The coupling cost is of the form
$$
R^\iota[m_1, m_2] = S^\iota(m_1, m_2) + O^\iota[m_1, m_2], \qquad \iota=1,2.
$$
\begin{itemize}
    \item[(i)] \textit{Separation.} The function $S^\iota$ represents the separation cost, i.e., the propension of agents to remain close to other agents of the same population and distant from those of the other population. Following \cite{abc}, we assume that 
    \begin{align*}    
    S^\iota(m_1, m_2) &= G^\iota\left(\int_{\Gamma} K(x, y) dm_1(y), \int_{\Gamma} K(x, y) dm_2(y), a^\iota, \epsilon\right),
    \end{align*}
    where
    \[
    G^\iota(r,s,\epsilon,a^\iota) = \left(\frac{r}{r+s+\epsilon} - a^\iota\right)^-, \quad
    K(x, y) = \frac{1}{2\delta}\chi_{B_\delta(x)}.\]
    Here $(a)^-$ denotes the negative part of $a \in \mathbb{R}$, $\chi_A$ is the characteristic function of the set $A$, $B_\delta(x) = \{y \in \Gamma : d_\Gamma(x, y) < \delta\}$, and $\epsilon$ is a regularization constant. The parameter $a^\iota \in (0,1)$ is a threshold of happiness: if the percentage of population $\iota$ living in $B_\delta(x)$ is below $a^\iota$, the players of this population pay a positive cost. In our model, workers prefer to live in residential areas far from factories, while firms have no particular preference for being close to other firms, hence we assume that $a^1$ is much larger than $a^2$.
    \item[(ii)] \textit{Overcrowding.} The function $O^\iota$ describes the agents' aversion to overcrowding. As in \cite{abc}, we consider
    $$
    O^\iota[m_1, m_2](x) = C^\iota \left(\int_{\Gamma} K(x, y) \frac{m_1(y) + m_2(y)}{2} dy - b^\iota\right)^+.
    $$
    An agent at $x$ starts to pay a positive cost when the average distribution of agents from both populations in the neighborhood $B_\delta(x)$ exceeds a threshold parameter $b^\iota$. Since workers are more sensitive to overcrowding, we assume that $b^2 > b^1$ and $C^2 > C^1$. It also represents the fact that, as in \cite{barilla}, the housing cost increases with the total density of the two populations.  
\end{itemize}

\begin{remark}
The function $R^\iota$, comprising the components $S^\iota$ and $O^\iota$ as previously defined, satisfies assumption \eqref{ip_coupling} if we replace $K(x,y)$ with a non-negative smooth kernel that equals 1 for $y \in B_\delta(x)$ and 0 outside a small neighborhood $B_\delta(x)$ (see \cite{abc}). This adjustment ensures that the kernel retains its essential properties while becoming smooth enough to satisfy the required assumptions.
It is possible, as in \cite{barilla}, to consider an overcrowding cost of the form $F(m_1, m_2) = f(m_1 + m_2)$, where $f$ is an increasing function that only takes into account the density of the two populations at a given position.

\end{remark}

\begin{table}[H]
    \centering
  
    \begin{tabular}{|c|c|c|l|}
        \hline
        \textbf{Parameter} & \textbf{workers}  & \textbf{firms} & \textbf{Description} \\
        \hline
        & & & \\
        $\rho^\iota$ & 1 & 4 & Mobility cost, typically lower values for workers \\ 
         & & &  and  higher values for firms ($\rho_1 < \rho_2$). \\
        $\Lambda^\iota$ & 1 & 1 & Housing cost, which depends on area-specific   \\
        & & &  factors such as connectivity and infrastructure. \\
        $a^\iota$ & 0.5 & 0.1 & Threshold of happiness, indicating the percentage   \\
        & & &   of $\iota-$population in $B_\delta(x)$ to avoid a positive cost. \\
        $b^\iota$ & 4 & 8 & Threshold parameter for overcrowding, deter-  \\
        & & & mining when  agents start  to pay a positive cost. \\
        $C^\iota$ & 1 & 1 & Coefficient for overcrowding cost\\
         $\delta$ & 0.1 & 0.1 & Radius defining the neighborhood $B_\delta(x)$ for \\
        & & &  interaction considerations.\\
        $\epsilon$ & $10^{-5}$  & $10^{-5}$  & Regularization constant used in the separation \\
        & & & cost function. \\
        $\mu^\iota_\alpha$   & 0.4         & 0.2       & Viscosity coefficient of a specific population \\
           & & & of agents  \\
        $\sigma$         & 0.5         & 0.5       & Entropic regularization introduced in the optimal\\
        & & &  transport problem \\
        $T$              & 2           & 2         & Time horizon\\
        $c$ & linear & linear & Transfer cost between two points of the network.\\
        & & & \\
        \hline
    \end{tabular}
    \caption{Model parameters and their descriptions.}
    \label{tab:my-table}
\end{table}

\begin{remark}

When not stated otherwise, we assume the model parameters as reported in Table \ref{tab:my-table}. This table also provides a summary of the  meaning of each parameter.  These parameters are essential for accurately simulating the processes of aggregation and segregation between the  populations in our model since they define the mobility, housing costs, happiness thresholds, and overcrowding sensitivities, among other factors. By carefully choosing these values, we can capture the complex dynamics and interactions within the populations under study.

\end{remark}

\subsubsection{Test 1}

In this initial test, we aim to demonstrate the various possible configurations achievable with our model. To keep our network as simple as possible, we consider a three-vertex network connecting the points in a 2D plane \(v_1 = (1,0)\), \(v_2 = (\cos(2\pi/3), \sin(2\pi/3))\), and \(v_3 = (\cos(4\pi/3), \sin(4\pi/3))\). This network is isomorphic to a periodic 1D domain, similar to the case discussed in \cite{barilla}. This simplified setup allows us to focus on the fundamental interactions and behaviors of the model without the added complexity of a more intricate network structure.

All the approximations presented in this section are obtained by implementing the schemes described above, using discretization parameters \(\Delta x = \Delta t = 0.01\). It is important to emphatize that this implementation with such a large CFL number is feasible due to the unconditional stability of the semi-Lagrangian scheme. This stability is crucial because it allows us to manage the numerical complexity of the model effectively. By leveraging the inherent stability of these schemes, we can achieve accurate and reliable results without compromising computational efficiency.

\begin{figure}[t]
\begin{center}
\begin{tabular}{c}

\includegraphics[width=0.95\textwidth]{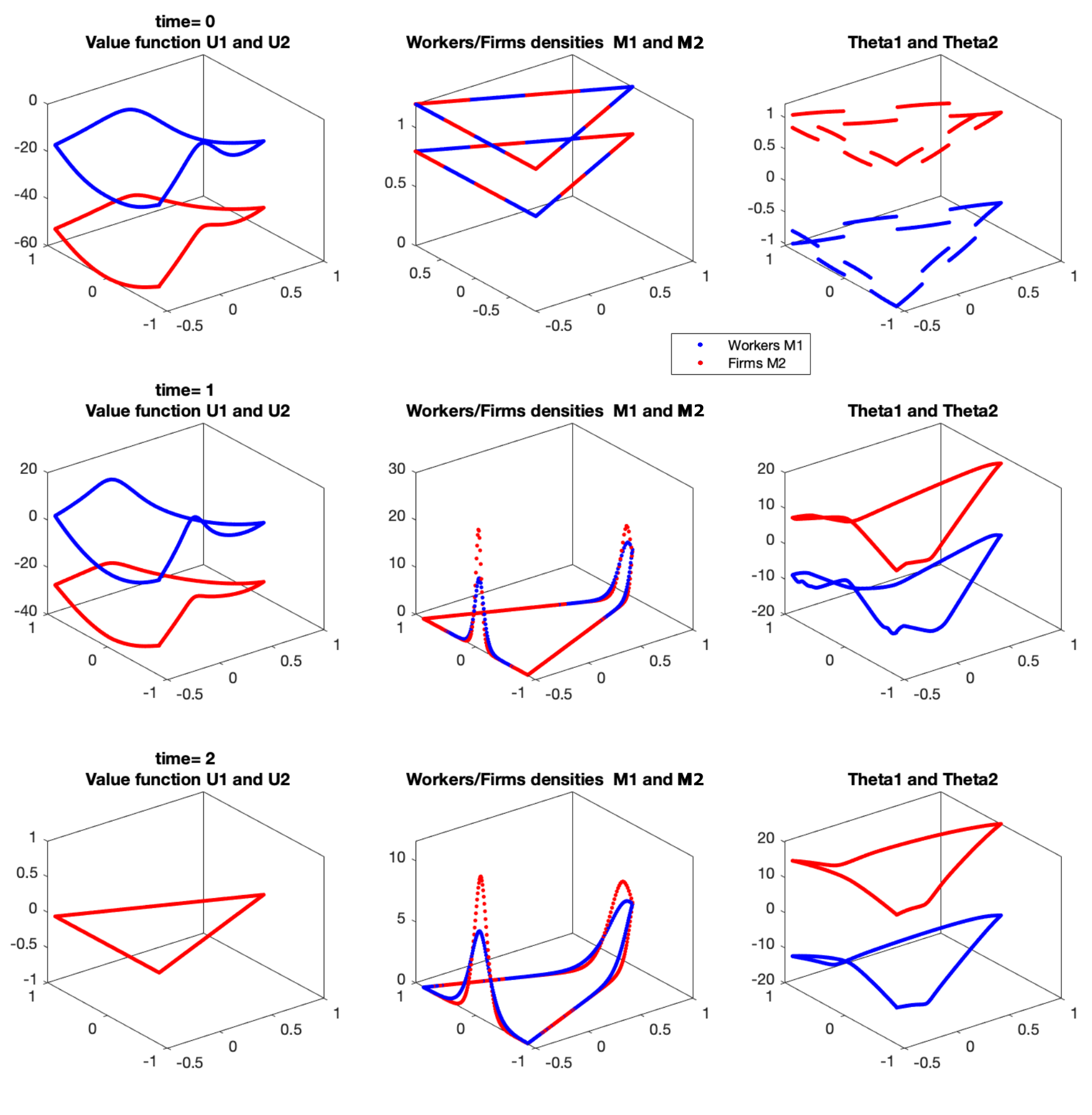}
\end{tabular}
\end{center}
\caption{Test 1a. Aggregation. Model parameters set as Table \ref{tab:my-table}.}\label{F:1}
\end{figure}

The equilibrium between the forward evolution (Fokker-Planck equation), the backward evolution (Hamilton-Jacobi equation), and the optimal transport problem is found using a fixed-point procedure. Although we lack a theoretical guarantee or an estimate of the convergence rate, we observed, as in the case of classic Mean-Field Game \cite{carlini2016DGA,CS17}, that convergence to a stable equilibrium is typically achieved within a few iterations (usually around 15-17 iterations) under a desired tolerance, which in our case is set to \(10^{-4}\). This empirical observation provides confidence in the robustness and efficiency of our approach.

In Figure \ref{F:1}, we observe that, with the parameter choices listed in Table \ref{tab:my-table}, our model produces an aggregation of the two populations (firms and workers). Specifically, starting from an initial configuration where firms and workers are distributed throughout the network with density values of 0.8 and 1.2 respectively, we observe that the populations tend to self-organize, forming two clusters on opposite sides of the network. In this case, firms and workers tend to aggregate in these two clusters to minimize the commuting cost, facilitated by a low coefficient for the overcrowding cost \(C^\iota\). Clearly, in the final configuration, the firms have a tendency to aggregate more, due to a lower viscosity coefficient \(\mu^2\).

\begin{figure}[t]
\begin{center}
\begin{tabular}{c}

\includegraphics[width=0.95\textwidth]{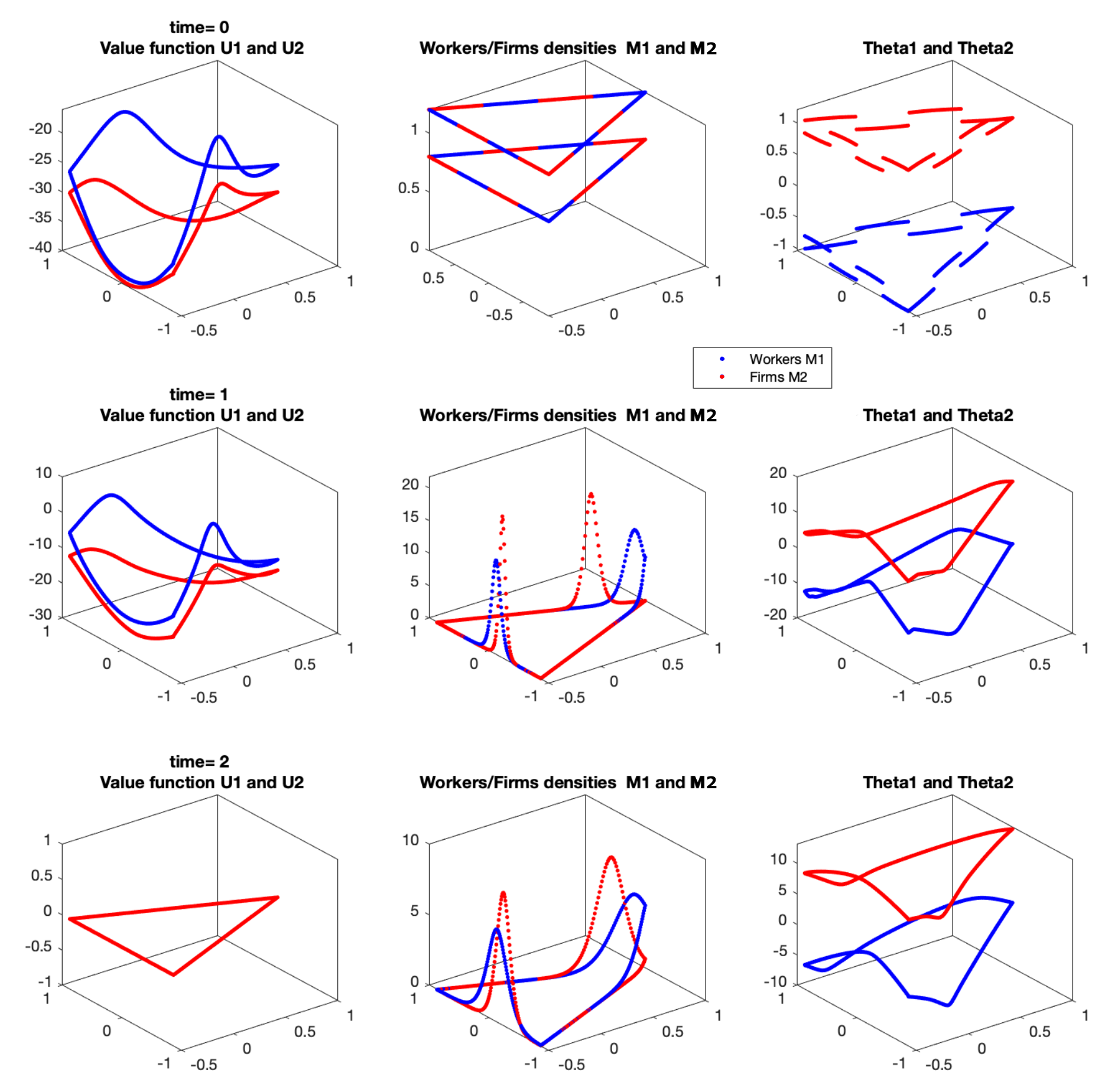}
\end{tabular}
\end{center}
\caption{Test 1b. Coexistence. Model parameters set as Table \ref{tab:my-table} with the exception of  \(C^1 = 5\) and \(C^2 = 1\).}\label{F:2}
\end{figure}

In the next scenario, altering the overcrowding cost to \(C^1 = 5\) and \(C^2 = 1\), we obtain the situation shown in Figure \ref{F:2}. Starting from the same initial distribution, we observe a slightly different final organization: firms and workers maintain a short distance from each other but only partially share the same spaces. This indicates a balance between the desire to stay close to one's population and the need to avoid overcrowding.

Finally, keeping the same overcrowding cost but setting the threshold parameter for overcrowding to \(b^1 = 1\) and \(b^2 = 2\), as shown in Figure \ref{F:3}, we achieve complete segregation of the two populations. From the same initial configuration, workers and firms migrate to opposite sides of the network, neglecting the commuting cost arising from the optimal transport problem. This drastic change in behavior highlights the sensitivity of the model to parameter variations and underscores the importance of careful parameter selection in simulating realistic scenarios.

\begin{figure}[t]
\begin{center}
\begin{tabular}{c}

\includegraphics[width=0.95\textwidth]{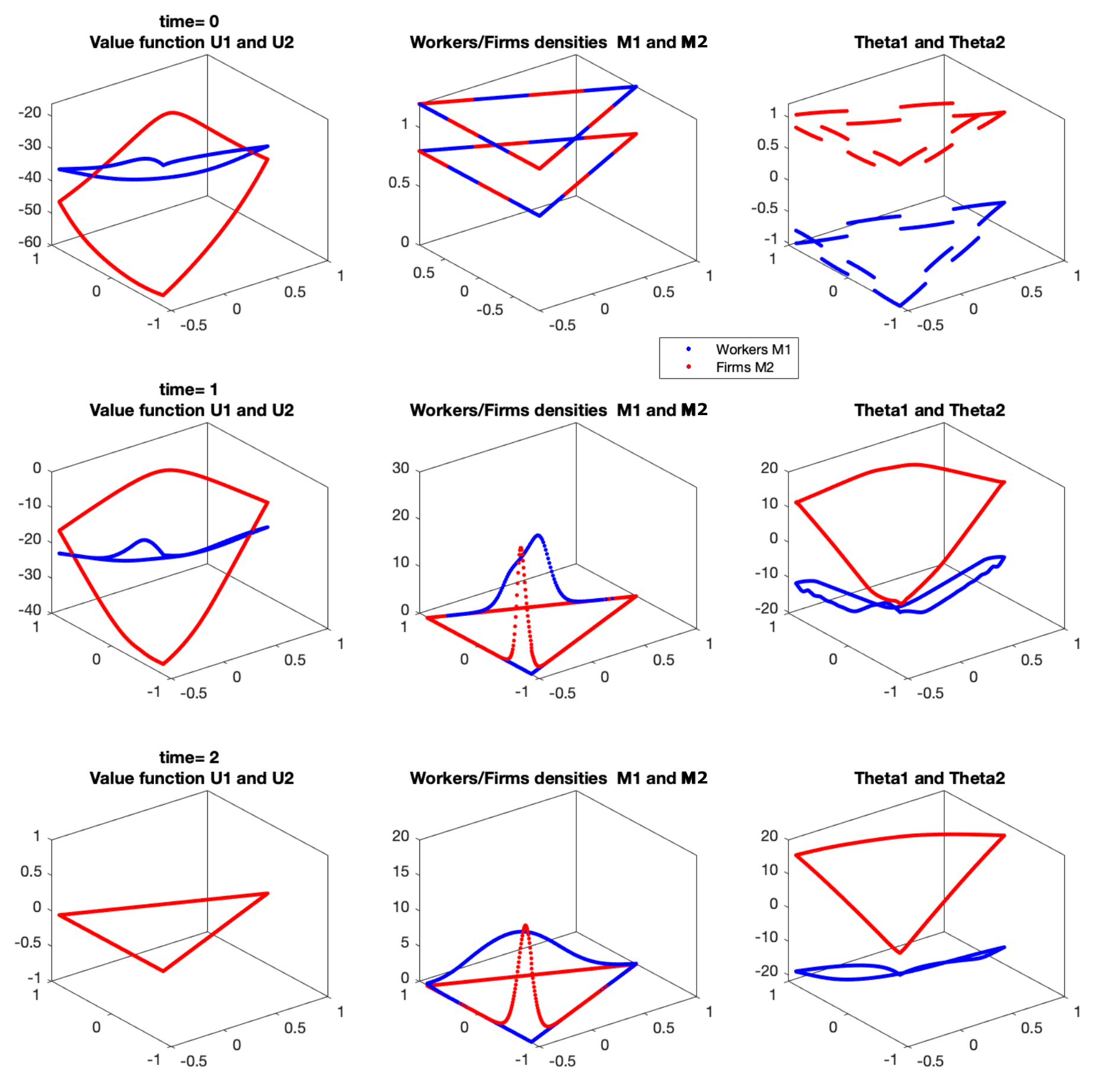}
\end{tabular}
\end{center}
\caption{Test 1c. Segregation. Model parameters set as Table \ref{tab:my-table} with the exception of  \(b^1 = 1\) and \(b^2 = 2\). }\label{F:3}
\end{figure}

It is important to keep in mind that the equilibrium solution in our model may not be unique. Consequently, starting from different initial conditions, our system may evolve to distinct final configurations. Nonetheless, our observations indicate that if the parameters are chosen to induce aggregation, coexistence, or segregation, the resulting configuration will exhibit the same qualitative features, regardless of the initial distribution.

This phenomenon is illustrated in Figure \ref{F:4}. In this case, we used the same parameters as in Figure \ref{F:3}, but we began with a different initial condition: an equal, uniform distribution of density for each population across the entire network. Despite this change, the system still evolved into a segregated configuration. However, unlike the scenario depicted in Figure \ref{F:3}, where workers and firms each formed a single cluster, in this new configuration, workers and firms each form three distinct clusters. These clusters subdivide the available space of the network between them.

This observation underscores an essential aspect of our model: while the exact spatial distribution of the populations can vary depending on the initial conditions, the overall pattern of aggregation, coexistence, or segregation dictated by the parameter choices remains consistent. This robustness highlights the model's ability to capture the fundamental dynamics of population distribution and interaction under different initial conditions.
 
\begin{figure}[h]
\begin{center}
\begin{tabular}{c}

\includegraphics[width=0.95\textwidth]{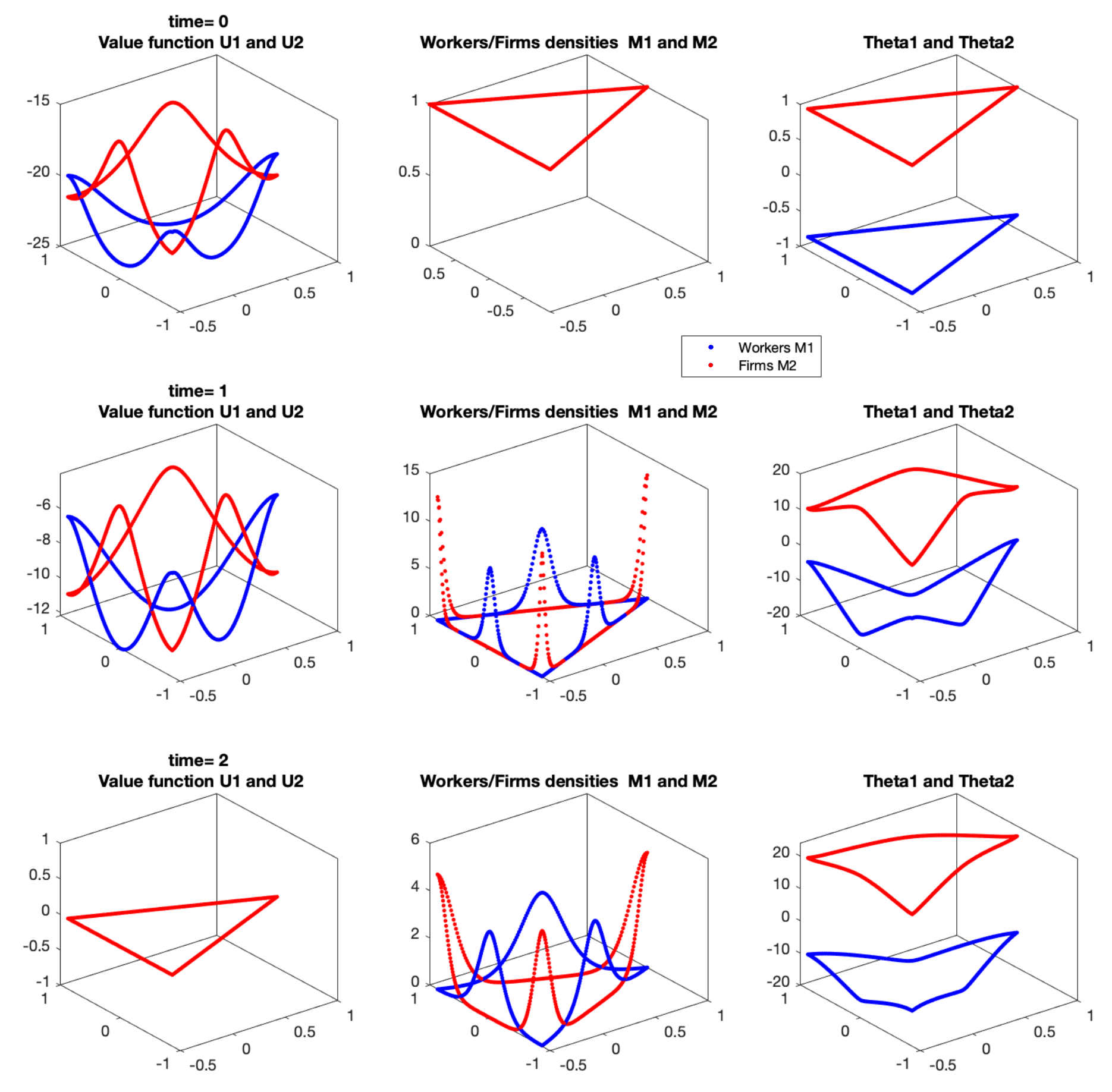}
\end{tabular}
\end{center}
\caption{Test 1d. Segregation from a different initial distribution. Model parameters set as Table \ref{tab:my-table} with the exception of  \(b^1 = 1\) and \(b^2 = 2\). }\label{F:4}
\end{figure}

In summary, while the precise equilibrium solution may vary with different initial conditions, the qualitative behavior of the system remains robust, demonstrating either aggregation, coexistence, or segregation based on the chosen parameters. This property is crucial for understanding the range of possible dynamics in population models and reinforces the significance of parameter selection in shaping these dynamics.

\subsubsection{Test 2}

In this second test, we consider a slightly more complex network where the vertices are located at \(v_1 = (0,0)\), \(v_2 = (1,0)\), \(v_3 = (1,1)\), and \(v_4 = (0,1)\), connected by five edges. Notably, vertices \(v_1\) and \(v_3\) have three connections each, while the other vertices have only two connections. This setup allows us to examine the impact of the optimal transport problem on our model. We keep all parameters constant as shown in Table \ref{tab:my-table}, except for the coefficients of the overcrowding cost, set to \(C^1 = 5\) and \(C^2 = 3\). The primary variable in this test is the transfer cost \(c(x,y)\), which we impose as a cost to minimize in the optimal transport problem. Specifically, we compare three commuting costs: linear, square root, and quadratic, defined as follows:

$$
\left\{
\begin{array}{ll}
c_{lin}(x,y) = |x-y|, & \\
c_{sqr}(x,y) = \sqrt{|x-y|}, & \hbox{ for any } x,y \in \Gamma, \\
c_{quad}(x,y) = |x-y|^2, & \\
\end{array}
\right.
$$

\begin{figure}[h]
\begin{center}
\begin{tabular}{c}
\includegraphics[width=0.85\textwidth]{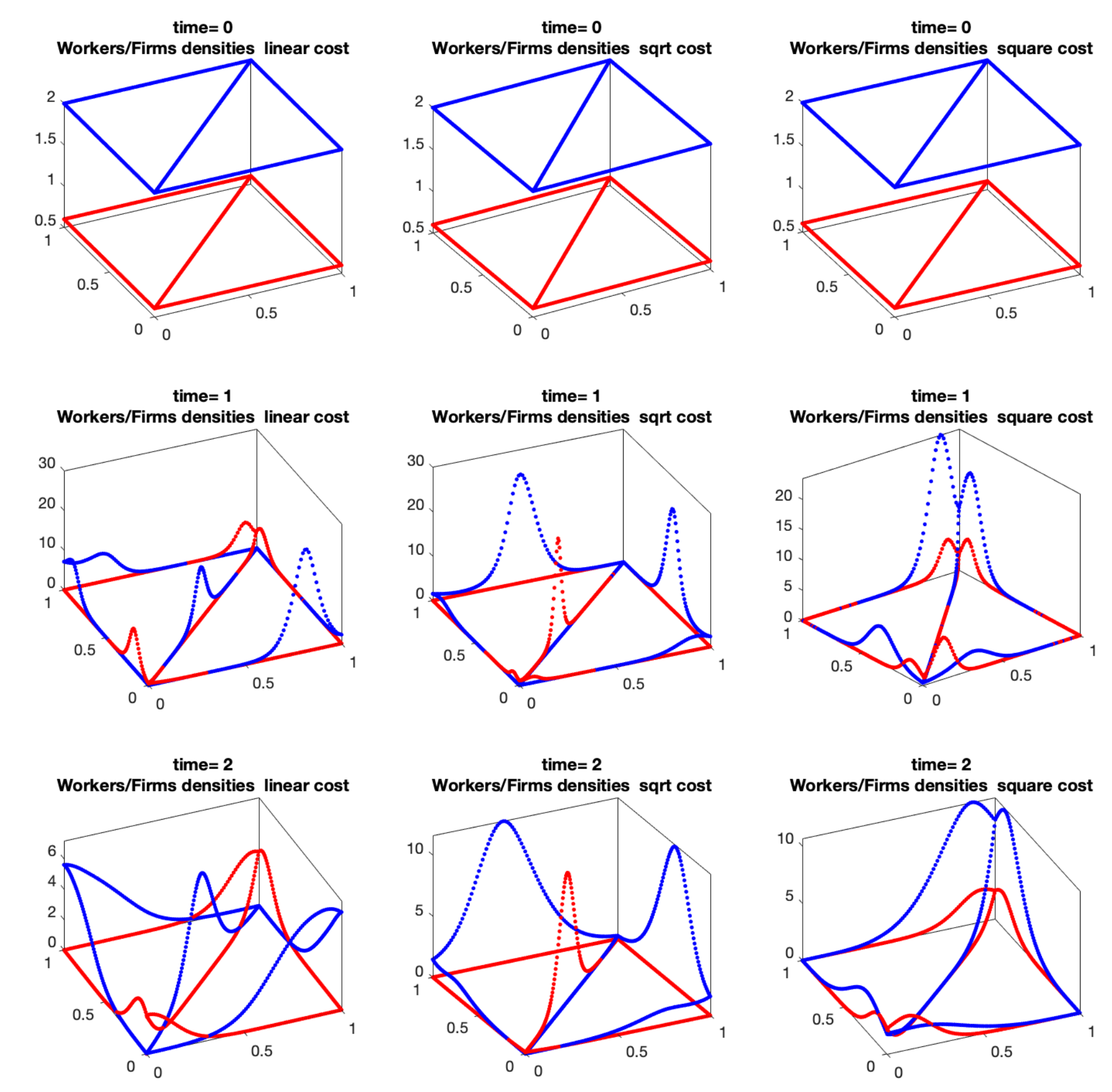}
\end{tabular}
\end{center}
\caption{Test 2. Different cost choices (linear, square root, quadratic) in the optimal transport problem. Other model parameters set as Table \ref{tab:my-table}.}\label{F:5}
\end{figure}

We present the results of this test in Figure \ref{F:5}, starting from an initial configuration where workers and firms are equally distributed across the network, but with a locally higher density for the workers' population. The figure compares the evolution of densities under the three different commuting cost functions. We observe that a quadratic cost encourages the two populations to aggregate due to the high commuting cost, promoting a clustering effect to minimize movement expenses. Conversely, a square root cost, representing a lower commuting cost, incentivizes the segregation of the two populations. This results in more dispersed configurations where populations occupy distinct regions to reduce overlap.

An intermediate scenario is observed with a linear cost, where the firms' population tends to settle in the central arc of the network, while workers aggregate into two symmetric clusters along the outer edges. This pattern is reminiscent of the typical configurations found in large, rapidly developing cities, where commercial enterprises often occupy central locations within the road network, while residential areas, not too distant from the city center, are preferred by the working population.

These results highlight the significant impact that different commuting cost functions can have on the spatial distribution of populations. By varying the cost function, we can simulate diverse urban planning scenarios, providing insights into how different transport policies might influence the organization of urban areas.

In summary, this test demonstrates that the choice of commuting cost function in the optimal transport problem can lead to significantly different equilibrium configurations. Quadratic costs tend to promote aggregation, linear costs result in a mixed structure with central clustering of firms, and square root costs foster segregation. This highlights the model's flexibility and its potential applications in urban planning and policy-making.

\subsubsection{Test 3 - A Development Simulation for New Paris Areas Post-Olympics}

The recent Olympic Games of Paris 2024 have brought significant investments and a comprehensive reorganization of the area between the three suburbs of Saint-Denis, Saint Ouen, and L'Île-Saint-Denis (see Figure \ref{F:5b}). This redevelopment focuses on the three villages constructed for the Paris Games, which are strategically spread across the northern suburbs of Paris, all within the Seine-Saint-Denis department. These villages will accommodate 4,250 athletes during the Olympics and 8,000 athletes during the Paralympics.

Considerable effort has been dedicated to making these facilities sustainable by repurposing former industrial buildings into amenities and accommodations for the athletes. This urban regeneration initiative is expected to provide new opportunities for residents and businesses in the area once the Games conclude.

\begin{figure}[h]
\begin{center}
\begin{tabular}{c}
\hspace{-1cm}
\includegraphics[width=0.45\textwidth]{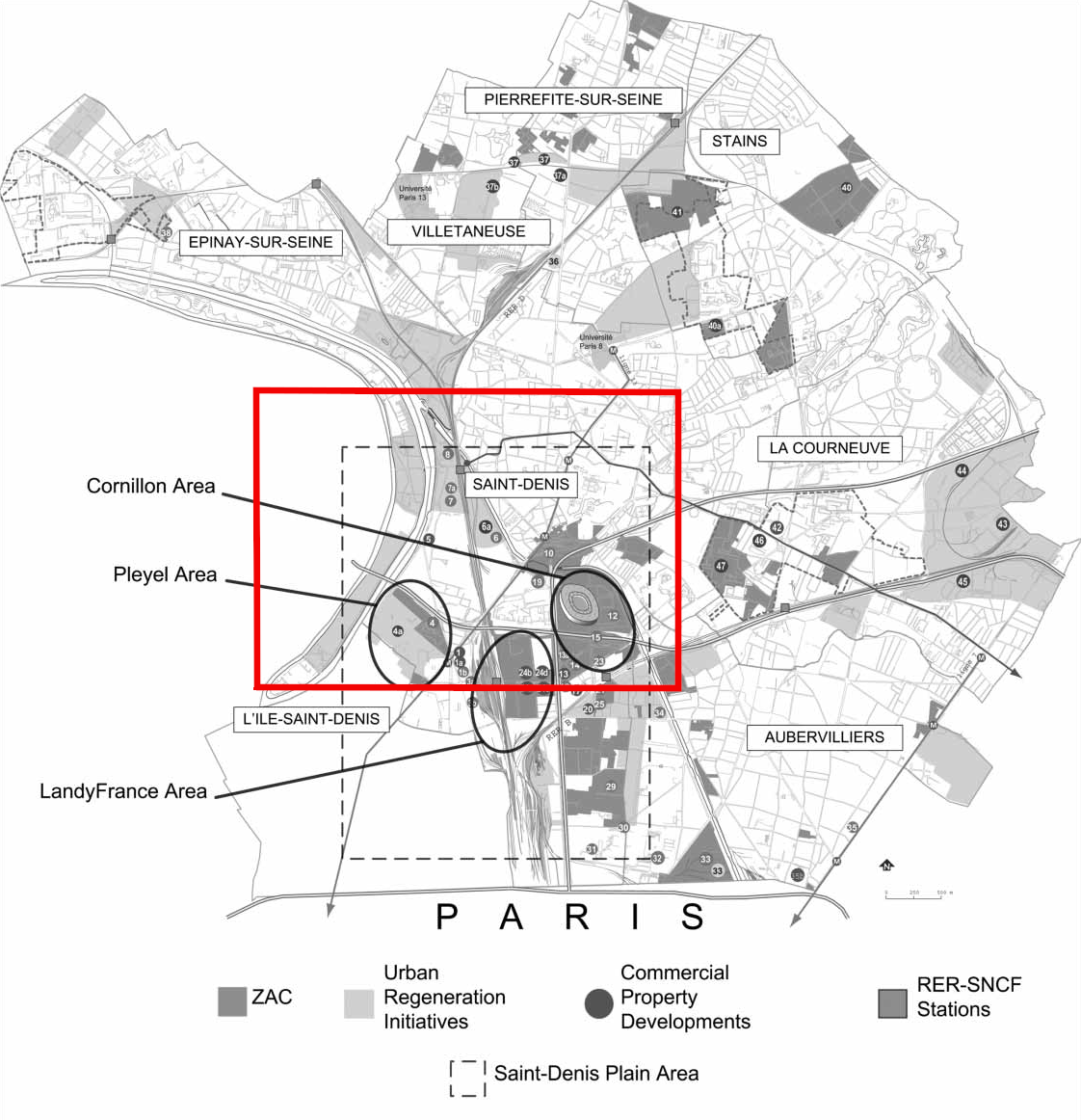}
\includegraphics[width=0.56\textwidth]{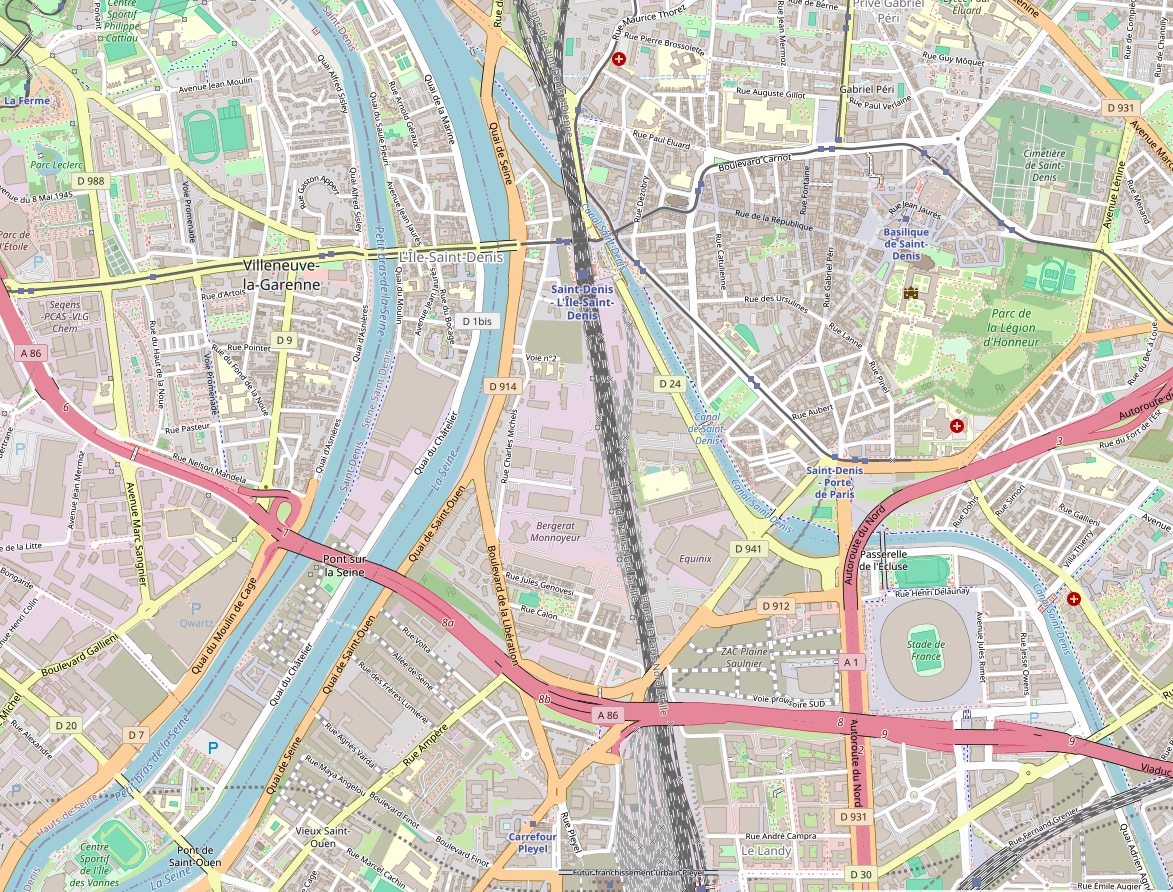}
\end{tabular}
\end{center}
\caption{Test 3. The Saint Denis plane in the \emph{grand projet de ville} in the northern part of the Paris region  \cite{NC06}. A close-up of the area around the Olympic Village. }\label{F:5b}
\end{figure}

In particular, the urban area of Saint-Denis (see Area 1 in Figure \ref{F:6}, left) is a densely populated suburb with a growing population of approximately 115,000 individuals. Additionally, the area around the Connection Hub Saint-Denis--Pleyel (see Area 2 in Figure \ref{F:6}, left) hosts a significant number of companies and is projected to employ between 15,000 and 22,000 people within a one-kilometer radius of the station.

These developments underscore the transformative impact of the Olympic Games on urban infrastructure and local economies. By revitalizing industrial spaces and enhancing connectivity, the initiative aims to foster sustainable growth and improve the quality of life for local residents. The strategic positioning of the athlete villages and the focus on sustainability highlight the Games' legacy, ensuring long-term benefits for the Seine-Saint-Denis department and its inhabitants.

\begin{figure}[h]
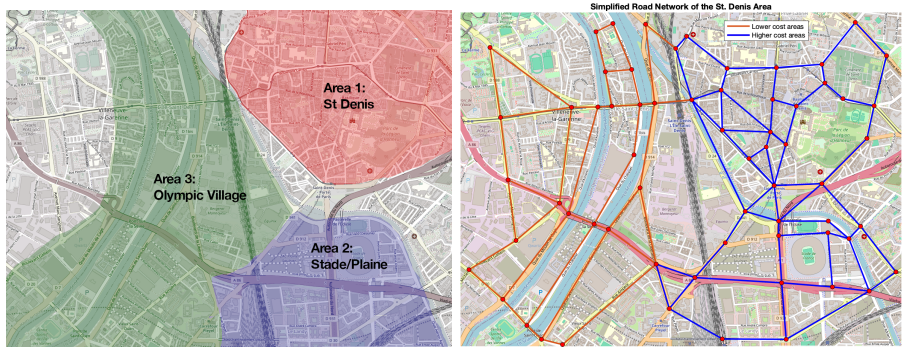

\begin{center}
\begin{tabular}{c}
\hspace{-0.5cm}
\includegraphics[width=0.48\textwidth]{Figures/parisOlympicsAreas.png}
\includegraphics[width=0.48\textwidth]{Figures/ParisNet.png}
\end{tabular}
\end{center}
\caption{Test 3. A subdivision of the area of interest and the road network that we consider (left) and the simplified road network (right). The blue lines indicate roads in areas with limited accessibility, while the orange lines highlight areas that underwent significant renovations during the 2024 Olympics.}\label{F:6}
\end{figure}

We use this example to test our model in a more realistic case. However, the purpose is mostly illustrative, as tuning the model parameters and conducting a data-driven experimental evaluation of the results are outside the objectives of this paper.

We consider a simplified road network, as shown in Figure \ref{F:6}, right. The network consists of 64 vertices and 101 roads, with multiple connections at each vertex, up to 6. We consider the network isolated from the rest of the city, therefore, no boundary conditions are imposed in the model.

\begin{figure}[h]
\begin{center}
\begin{tabular}{c}
\hspace{-0.8cm}
\includegraphics[width=1\textwidth]{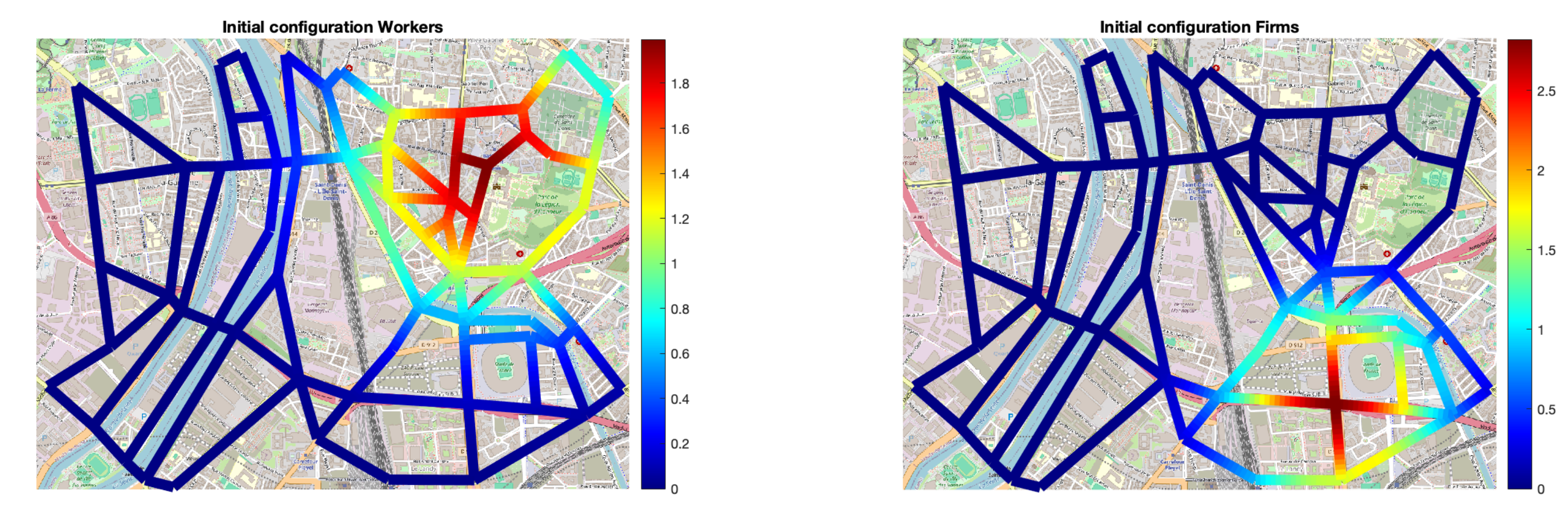}
\end{tabular}
\end{center}
\caption{Test 3. Initial conditions of our test. The workers are concentrated around the center of the Saint-Denis area, while the firms (more concentrated) are initially located in the Stade/Plaine area.}
\end{figure}

As initial conditions, we concentrate the workers predominantly around the center of Saint-Denis, while the firms are highly concentrated around the Stade de France and its surrounding facilities. Specifically, given that the network lies within the 2D box $[0,12]^2$, the initial distributions are defined as:

$$
m_0^\iota(x,y) = \frac{1}{\gamma\pi} e^{-\frac{(x-v_1^\iota)^2+(y-v_2^\iota)^2}{\gamma}},
$$
with \(v_1 = (8.94, 9.22)\), \(\gamma = 8\), and \(v_2 = (8.55, 4.5)\), \(\gamma = 4\). The housing cost \(\Lambda^\iota_\alpha\) is set to 5 on historically urbanized roads (marked in blue in Figure \ref{F:6}) and 1 on roads affected by the Olympic Games renovations (in orange in Figure \ref{F:6}).

The remaining parameters of the model are as listed in Table \ref{tab:my-table}, with the exceptions of \(b^1 = 1\) and \(b^2 = 2\). The transfer cost used in the Optimal Transport problem is linear with respect to the distance. The discretization parameters are \(\Delta x = 0.1\) and \(\Delta t = 0.05\).

\begin{figure}[h]
\begin{center}
\begin{tabular}{c}
\hspace{-0.6cm}
\includegraphics[width=1\textwidth]{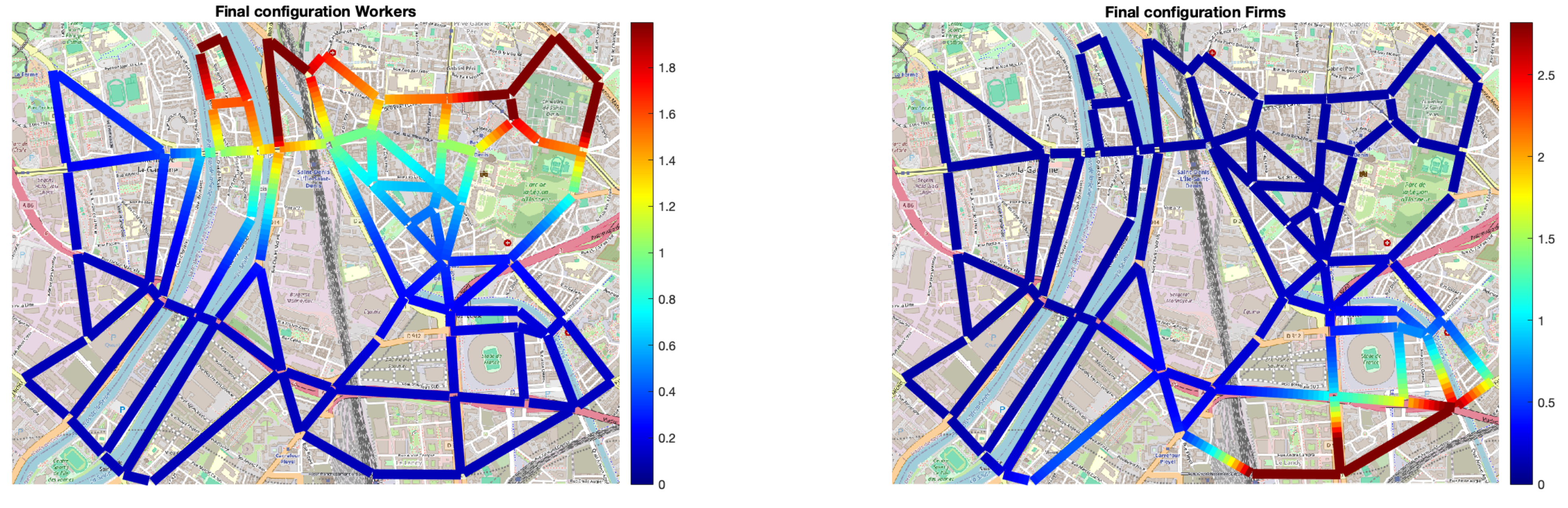}
\end{tabular}
\end{center}
\caption{Test 3. Final configuration at \(T=2\). The workers form two clusters in the northern part of the network, while the firms concentrate in the southern area near Stade/Plaine.}\label{F:8}
\end{figure}

We observe the evolution of the two populations until reaching a final configuration at \(T = 2\), as shown in Figure \ref{F:8}. Given the segregation between the two populations observed in Test 1 with these parameters, we anticipated a similar outcome. Workers and firms distribute themselves on distinct sides of the network. However, unlike Test 1c, they do not occupy opposite areas of the network, which can be attributed to the commuting cost incurred for travel between workers' and firms' locations. Additionally, the non-constant housing cost introduces an intriguing dynamic: workers split into two clusters, one in the high-cost northern area of Saint-Denis, and another in the lower-cost new Olympic Village area. This distribution balances housing and commuting costs, as firms remain on the right side of the network. Interestingly, the new renovated areas remain underutilized by both populations, likely due to high mobility costs and the inherent network geometry.

{
While the present version of the model considers homogeneous agents within each population, 
the framework can readily accommodate \emph{heterogeneous preferences} or \emph{production types}, 
for instance by introducing dependence of the Lagrangian or the coupling costs on additional agent-specific parameters. 
This extension would allow the analysis of \emph{income-based segregation} or \emph{firm specialization} effects, 
and will be the subject of future research. 

Moreover, the parameters employed in the numerical experiments have been chosen to illustrate the qualitative behavior of the system, 
but the model can also be \emph{calibrated to empirical data}--for example, by matching observed commuting flows or density distributions. 
Such an empirical calibration would enable quantitative applications to real urban systems 
and strengthen the connection between theoretical modeling and observed spatial patterns.}

\section{Conclusions}

This study provides a comprehensive analysis of a mathematical model designed to simulate urban evolution, driven by interactions between two large populations: workers and firms. Utilizing a network to represent the city's map, with edges depicting residential areas and communication routes, the model offers a robust framework to understand the dynamics of urban planning through a two-population Mean-Field Game system coupled with an Optimal Transport problem.

The findings establish the existence and uniqueness of solutions, derived through a fixed-point argument, circumventing the need for symmetric interactions between the populations. This approach not only validates the model's theoretical foundation but also highlights the realistic assumptions about human preferences and industrial clustering. People tend to avoid living near polluting factories or in overcrowded areas, while industries gravitate towards clustering for better transportation efficiency.

Moreover, the numerical simulations provide practical insights into urban planning. They reveal how varying parameters, such as the commuting costs represented by edge lengths, significantly impact urban development. The simulations also demonstrate the model's capability to simulate urban patterns under different scenarios, proving its possible utility for policymakers and urban planners.

{From an economic perspective, the equilibria produced by our model illustrate mechanisms of segregation and clustering akin to those discussed in the urban economics literature. High commuting or mobility costs lead to spatial segregation between workers and firms, while stronger network connectivity and lower relocation costs promote mixed equilibria resembling polycentric urban forms. Differences in the network topology correspond to alternative urban infrastructures, highlighting how transport investments or zoning policies can alter the balance between agglomeration and dispersion. Thus, beyond its mathematical structure, the model provides a stylized yet flexible tool for analyzing policy interventions such as congestion pricing or the expansion of transport capacity within a unified Mean-Field Game--Optimal Transport framework.}

In conclusion, this research bridges the gap between theoretical urban planning models and real-world applications. It underscores the importance of considering both human preferences and industrial needs in urban development. The model's versatility in simulating various urban scenarios offers a valuable tool for designing sustainable and efficient cities, aligning with contemporary urbanization trends and challenges. Future work could expand on this model by integrating additional factors such as environmental impacts and economic policies, further enhancing its applicability and accuracy in urban planning. Additionally, incorporating Dirichlet boundary conditions, where agents can enter or exit the system, would allow for changes in the number of workers and firms, enabling the city to evolve into different configurations.
\par\medskip\noindent
{\bf Data availability} The present manuscript has no associated data.

\begin{acknowledgements}
This work has been partially supported by the ``INdAM-GNAMPA Project'' (CUP E53C23001670001) ``Modelli mean-field games per lo studio dell'in\-qui\-na\-mento atmosferico e i suoi impatti'' and by the Gruppo Nazionale per l'Analisi Matematica e le loro Applicazioni (GNAMPA-INdAM), INdAM-GNAMPA projects 2022 and 2023, Gruppo Nazionale per il Calcolo Scientifico’ (GNCS-INdAM) and  by PRIN project 2022 (Funds 2022238YY5) “Optimal control problems: analysis, approximation ”.
\end{acknowledgements}

\appendix
\section{Appendix}
\label{app:A}

\subsection{Results for the Optimal Transport problem}
For $m_1$, $m_2\in\cM$, consider the Optimal Transport problem
\begin{equation}\label{OT}
	 \cC(m_1, m_2)=\inf_{\gamma\in\Pi(m_1, m_2)}\int_{\Gamma\times\Gamma}c(x, y)d\gamma(x, y),
\end{equation}
where the cost $c$  satisfies \eqref{comm_cost} and its dual formulation
$$ \cC(m_1, m_2)=\sup_{\phi, \psi\in C(\Gamma)}\left\{\int_{\Gamma}\phi dm_1+\int_{\Gamma}\psi dm_2:\phi(x)+\psi(y)\leq c(x, y)\ \text{for every }x,y\in\Gamma\right\}.	$$
Observe that if $\phi$ is a Kantorovich potential for \eqref{OT}, then the couple $(\phi-\alpha, \phi^c+\alpha)$ is still a Kantorovich potential for any $\a\in\R$.
The results that we report below are well known, even in much more general contexts \cite{santambrogio,villani2}, and  we briefly sketch the adaptation to the network setting.
\begin{prop}\label{normkantorovich}
	Let $\phi$ be a Kantorovich potential for \eqref{OT} such that $\int_{\Gamma}\phi(x)dx=0$. Then $\phi$ is Lipschitz continuous on $\G$ and $\|\phi\|_{L^\infty(\G)}, \|\phi^c\|_{L^\infty(\G)} \le C$ with $C$ depending only on $c$ and $\Gamma$.
\end{prop}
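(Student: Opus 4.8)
The proof rests on the $c$-concavity of a Kantorovich potential. Since $\phi$ is a maximizer in the dual problem associated with \eqref{OT}, it is $c$-concave, so that $\phi=\phi^{cc}$, i.e.
\[
\phi(x)=\inf_{y\in\Gamma}\{c(x,y)-\phi^c(y)\},\qquad x\in\Gamma .
\]
The plan is to read the Lipschitz continuity and the $L^\infty$ bound directly off this representation, using only that $c\in C^1(\Gamma\times\Gamma)$ (hence Lipschitz on the compact set $\Gamma\times\Gamma$, see \eqref{comm_cost}) together with the normalization $\int_\Gamma\phi\,dx=0$.

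For the Lipschitz estimate, I would first note that, since $\Gamma\times\Gamma$ is compact and $c\in C^1$, there is a constant $L_c$, depending only on $c$, such that $x\mapsto c(x,y)$ is $L_c$-Lipschitz with respect to the geodesic distance $d_\Gamma$, uniformly in $y\in\Gamma$. Being a pointwise infimum of the family $\{c(\cdot,y)-\phi^c(y)\}_{y\in\Gamma}$ of $L_c$-Lipschitz functions, $\phi$ is itself $L_c$-Lipschitz on $(\Gamma,d_\Gamma)$; the same argument applied to $\phi^c(y)=\inf_{x}\{c(x,y)-\phi(x)\}$ shows that $\phi^c$ is $L_c$-Lipschitz as well. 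The point requiring care in the network setting is precisely the passage from the $C^1$ regularity of $c$ to a Lipschitz constant with respect to $d_\Gamma$: on each edge the restriction of $c$ is $C^1$ in the arc-length parametrization, hence Lipschitz, and since $d_\Gamma(x,y)$ is realized as the arc length of a path through the vertices, one glues these edgewise estimates together using the continuity of $c$ at the vertices. This is the main, though essentially routine, obstacle.

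For the $L^\infty$ bound I would exploit the zero-mean normalization. Since $\phi$ is continuous on the connected compact set $\Gamma$ and $\int_\Gamma\phi\,dx=0$, there exists $x_0\in\Gamma$ with $\phi(x_0)=0$; the Lipschitz estimate then yields $|\phi(x)|=|\phi(x)-\phi(x_0)|\le L_c\,d_\Gamma(x,x_0)\le L_c\,\mathrm{diam}(\Gamma)$ for every $x\in\Gamma$, so $\|\phi\|_{L^\infty(\Gamma)}\le L_c\,\mathrm{diam}(\Gamma)$. Finally, using $c\ge 0$ in the definition $\phi^c(y)=\inf_x\{c(x,y)-\phi(x)\}$, one obtains $-\|\phi\|_{L^\infty(\Gamma)}\le\phi^c(y)\le \|c\|_{L^\infty(\Gamma\times\Gamma)}+\|\phi\|_{L^\infty(\Gamma)}$, whence $\|\phi^c\|_{L^\infty(\Gamma)}$ is bounded by a constant depending only on $\|c\|_{L^\infty}$, $L_c$ and $\mathrm{diam}(\Gamma)$, i.e. only on $c$ and $\Gamma$, as claimed.
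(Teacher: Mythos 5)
Your proposal is correct and takes essentially the same route as the paper: Lipschitz continuity of $\phi$ is obtained by writing it as the infimum of the equi-Lipschitz family $c(\cdot,y)-\phi^c(y)$, and the $L^\infty$ bounds then follow from the zero-mean normalization together with the definition of $\phi^c$. The only cosmetic difference is that you locate a point $x_0$ with $\phi(x_0)=0$ via the intermediate value theorem, whereas the paper integrates the Lipschitz estimate $|\phi(x)-\phi(y)|\le L\,d_\Gamma(x,y)$ over $\Gamma$ against the normalized measure; both yield $\|\phi\|_{L^\infty(\Gamma)}\le L\,\operatorname{diam}(\Gamma)$ and then the same bound on $\phi^c$.
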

\begin{proof}
Since \eqref{comm_cost} holds, i.e. $c\in C^1(\Gamma\times\Gamma)$, $c$ is Lipschitz continuous as well, that is there exists $L>0$ such that
$$
|c(x, y)-c(x', y')|\leq L(d_{\Gamma}(x, x')+d_{\Gamma}(y, y'))\quad\text{for every }x,x',y,y'\in\Gamma.
$$
Moreover since $\phi$ is a $c$-concave function, for every $x\in\Gamma$ it can be written as $\phi(x)=\inf_{y\in\Gamma}g_y(x)$ with $g_y(x):=c(x, y)-\phi^c(y)$ and the functions $g_y$ satisfy
$|g_y(x)-g_y(x')|\leq Ld_{\Gamma}(x, x')$, i.e. they are equi-Lipschitz continuous. This proves that $\phi$ is Lipschitz continuous (being the infimum of a family of equi-Lipschitz functions) and shares the same Lipschitz constant of $c$. Furthermore, we normalize $\phi$ and $\phi^c$ as
$$
\tilde\phi(x)=\phi(x)- \int_{\Gamma}\phi(y)dy,\quad\tilde\phi^c(x)=\phi^c(x)+\int_{\Gamma}\phi(y)dy.
$$
Hence, by \eqref{eq:int}, $\int_\Gamma\tilde\phi(x)dx=0$ and, by the Lipschitz continuity of $\phi$ and the boundedness of $c$, we have a uniform bound on $\tilde\phi$ and $\tilde\phi^c$. Indeed, by \eqref{eq:int}, we have
\begin{align*}	
 |\tilde\phi(x)|&=\left|\phi(x)-   \int_{\Gamma}\phi(y)dy\right|\leq \int_{\Gamma}|\phi(x)-\phi(y)|dy\\
&\leq   L\int_{\Gamma}d_{\Gamma}(x, y)dy\leq L\operatorname{diam}(\Gamma),
\end{align*}
where $\operatorname{diam}(\Gamma)= \sup_{x,y\in \Gamma}d_\Gamma (x,y)$, and
\begin{multline*}
|\tilde\phi^c(x)|=\inf_{y\in\Gamma}\{c(x, y)-\phi(y)\}+ \int_{\Gamma}\phi(y)dy\\
=\inf_{y\in\Gamma}\left\{c(x, y)-\left(\phi(y)- \int_{\Gamma}\phi(y)dy\right)\right\}=\inf_{y\in\Gamma}\{c(x, y)-\tilde\phi(y)\},
\end{multline*}
which implies
$$
|\tilde\phi^c(x)|\leq\|c\|_{L^{\infty}(\Gamma\times\Gamma)}+\|\tilde\phi\|_{L^{\infty}(\Gamma)}.
$$
\end{proof}
We recall that the support of a measure $m \in\cM$, denoted with $\supp(m)$, is the complement of the largest open set which is negligible for that measure.
\begin{prop}	\label{optimaltransportnetwork1}
Let $\gamma$  and  $\phi$  be an optimal   plan  and, respectively, a Kantorovich potential for \eqref{OT}. Then 
	  \begin{itemize}
	  	\item[$(i)$] For all $x, y\in\supp(\gamma)$
	  	\begin{equation}\label{kant-rub2}
	  		\phi(x)+\phi^c(y)=c(x, y) .
	  	\end{equation}
	  	\item [$(ii)$] If  $x_0, y_0\in \supp(\gamma)$, $x_0\not\in \cV$ and 
	  	$\phi$ is differentiable at $x_0$, then $\partial\phi(x_0)=\partial_xc(x_0, y_0)$.
	  \end{itemize} 
\end{prop}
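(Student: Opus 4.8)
The plan is to prove $(i)$ by the classical complementary-slackness (zero-duality-gap) argument adapted to the network, and then to deduce $(ii)$ from a first-order touching condition for $\phi$ at the point $x_0$.

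For $(i)$, first I would record the two facts that come directly from the definitions. By the definition of the $c$-transform, $\phi^c(y)=\inf_{x\in\Gamma}\{c(x,y)-\phi(x)\}$, one has the pointwise inequality $\phi(x)+\phi^c(y)\le c(x,y)$ for every $x,y\in\Gamma$. By optimality of $\gamma$ for \eqref{OT} together with the defining optimality of the Kantorovich potential $\phi$ for the dual problem, one has $\int_{\Gamma\times\Gamma}c(x,y)\,d\gamma=\cC(m_1,m_2)=\int_{\Gamma}\phi\,dm_1+\int_{\Gamma}\phi^c\,dm_2$. Since $\gamma\in\Pi(m_1,m_2)$ has marginals $m_1$ and $m_2$, the right-hand side equals $\int_{\Gamma\times\Gamma}[\phi(x)+\phi^c(y)]\,d\gamma$. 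Subtracting, the nonnegative integrand $c(x,y)-\phi(x)-\phi^c(y)$ integrates to zero against $\gamma$ and hence vanishes $\gamma$-a.e.; by continuity of $c$ and the Lipschitz continuity of $\phi$ and $\phi^c$ (Proposition \ref{normkantorovich}), its zero set is closed and therefore contains $\supp(\gamma)$, which is exactly \eqref{kant-rub2}.

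For $(ii)$, I would freeze the second variable at $y_0$ and compare $\phi$ with the function $g(x):=c(x,y_0)-\phi^c(y_0)$. The pointwise constraint gives $\phi(x)\le g(x)$ for all $x\in\Gamma$, while part $(i)$ gives $\phi(x_0)=g(x_0)$; hence $g-\phi\ge 0$ attains the value $0$ at $x_0$, so $x_0$ is a global minimizer of $g-\phi$. Because $x_0\notin\cV$, the point lies in the interior of a single edge, so along that edge both $g$ (thanks to $c\in C^1$, see \eqref{comm_cost}) and $\phi$ (by the differentiability hypothesis) are differentiable, and the first-order necessary condition at an interior minimum yields $\partial\phi(x_0)=\partial g(x_0)=\partial_x c(x_0,y_0)$.

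The step I expect to require the most care is twofold: in $(i)$, the passage from ``$\gamma$-a.e.'' to ``everywhere on $\supp(\gamma)$'', which genuinely uses the continuity of all three functions; and in $(ii)$, the exploitation of the hypothesis $x_0\notin\cV$. It is precisely this assumption that collapses the problem to an ordinary one-dimensional interior-minimum argument and lets us avoid the Kirchhoff/transition structure of the derivative at the vertices, where the plain vanishing of $\partial(g-\phi)$ would have to be replaced by a sign condition on the one-sided outgoing derivatives.
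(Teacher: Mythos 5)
Your proof is correct and follows essentially the same route as the paper's: part $(i)$ via the zero-duality-gap argument (the nonnegative continuous function $c(x,y)-\phi(x)-\phi^c(y)$ integrates to zero against $\gamma$, hence vanishes on the closed set $\supp(\gamma)$), and part $(ii)$ via the first-order condition at an interior extremum of $x\mapsto c(x,y_0)-\phi(x)$, using $x_0\notin\cV$ to reduce to a one-dimensional problem on a single edge. Your remark about avoiding the Kirchhoff structure at vertices makes explicit a point the paper leaves implicit, but the argument itself is the same.
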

\begin{proof}
Recall that, by the definition of Kantorovich potential, we have
	\begin{equation*}
 0\le	\phi(x)+\phi^c(y)\leq c(x, y),\quad\text{for all }(x, y)\in\Gamma\times\Gamma,
	\end{equation*}
	and
	$$
	\int_{\Gamma\times\Gamma}c(x,y)d\gamma=\int_{\Gamma}\phi(x) dm_1+\int_{\Gamma}\phi^c(y)dm_2=\int_{\Gamma\times\Gamma}(\phi(x)+\phi^c(y))d\gamma.
	$$
	It follows that \eqref{kant-rub2} holds $\gamma$-a.e. on $\G\times\Gamma$. Since $\phi$ and $\phi^c$ are continuous on $\Gamma$, the equality \eqref{kant-rub2} is satisfied on a closed set, i.e. on the support of the measure $\gamma$.
	\par
	Let   $x_0, y_0\in \supp(\gamma)$, $x_0\not\in \cV$, be such that $\phi$ and $c(\cdot, y_0)$ are differentiable at $x_0$. By \eqref{kant-rub2} and the definition of $\phi^c$, it  follows that the function
	$$
	x\longmapsto\phi(x)-c(x, y_0)
	$$
	has a minimum in $x=x_0$. Hence  we have that $\partial\phi(x_0)=\partial_xc(x_0, y_0)$.
\end{proof}
The next result is proved in the Euclidean case in \cite[Proposition 7.18]{santambrogio}.
\begin{prop}	\label{optimaltransportnetwork2}
	Assume  that at least one of the measures $m_1$, $m_2$ in \eqref{OT} is supported on the whole $\Gamma$. Then, up to additive constants, there exists  a unique Kantorovich potential for  \eqref{OT}.
\end{prop}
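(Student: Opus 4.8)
The plan is to establish uniqueness, since existence of a Kantorovich potential is already guaranteed by the Kantorovich duality recalled above. Assume, without loss of generality, that $m_1$ is supported on the whole $\G$, and let $\phi_1$ and $\phi_2$ be two $c$-concave Kantorovich potentials for \eqref{OT}. By Proposition \ref{normkantorovich} both are Lipschitz continuous on $\G$, so their restrictions $\phi_{1,\a}$ and $\phi_{2,\a}$ to each edge $\G_\a$ are differentiable at Lebesgue-almost every point of $(0,\ell_\a)$.

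First I would fix an optimal plan $\gamma\in\Pi(m_1,m_2)$ for $\cC(m_1,m_2)$, whose first marginal is $m_1$, and invoke Proposition \ref{optimaltransportnetwork1}$(ii)$: at every $x_0\in\G_\a\setminus\cV$ at which both $\phi_1$ and $\phi_2$ are differentiable and for which there exists $y_0$ with $(x_0,y_0)\in\supp(\gamma)$, one gets $\partial\phi_1(x_0)=\partial_xc(x_0,y_0)=\partial\phi_2(x_0)$. Hence the two derivatives coincide at every such point.

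The key step is to show that the set of such points exhausts each edge up to a Lebesgue-null set. Let $A:=\{x\in\G:\ (x,y)\in\supp(\gamma)\ \text{for some}\ y\in\G\}$ denote the projection of $\supp(\gamma)$ onto the first factor; since $m_1$ is the first marginal of $\gamma$ one has $m_1(A)=\gamma(\{(x,y):x\in A\})\ge\gamma(\supp(\gamma))=1$, so $A$ carries full $m_1$-measure. Using the hypothesis that $m_1$ is supported on the whole $\G$ — and, in the situation where this proposition is applied, $m_1$ being moreover equivalent to Lebesgue measure on each edge thanks to the lower bound \eqref{initial_cond} preserved by Proposition \ref{lemmapreserv} — it follows that $A$ meets Lebesgue-almost every point of every $\G_\a$. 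Discarding the finite set $\cV$ together with the Lebesgue-null non-differentiability sets of $\phi_1$ and $\phi_2$, we deduce $\partial\phi_{1,\a}=\partial\phi_{2,\a}$ a.e.\ on $(0,\ell_\a)$, whence $\phi_1-\phi_2$ is constant, say equal to $k_\a$, on each $\G_\a$.

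Finally I would glue these edgewise constants. Because $\phi_1$ and $\phi_2$ are continuous across the vertices, whenever two edges $\G_\a,\G_\b$ share a vertex $\nu_j$ one has $k_\a=(\phi_1-\phi_2)|_{\G_\a}(\nu_j)=(\phi_1-\phi_2)|_{\G_\b}(\nu_j)=k_\b$; since $\G$ is connected, all the $k_\a$ agree with a single constant $k$, giving $\phi_1=\phi_2+k$ on $\G$ and proving uniqueness up to additive constants. The main obstacle I expect is exactly the measure-theoretic passage from equality of the derivatives $m_1$-almost everywhere to equality Lebesgue-almost everywhere, for which the full-support hypothesis, sharpened by the lower bound on the density, is essential; the subsequent vertex-gluing step is where the network geometry — continuity at the vertices and connectedness of $\G$ — takes over the role played in the Euclidean case by the connectedness of the domain.
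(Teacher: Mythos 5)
Your overall strategy coincides with the paper's: apply Proposition \ref{optimaltransportnetwork1}$(ii)$ along a fixed optimal plan to identify the derivatives of two potentials off the vertices, then conclude by connectedness of $\G$ and continuity at the vertices. However, there is a genuine gap at precisely the step you call the key step. You pass from ``$A$ has full $m_1$-measure'' to ``$A$ has full Lebesgue measure on each edge'' by invoking an extra hypothesis — that $m_1$ is equivalent to Lebesgue measure, with density bounded below via \eqref{initial_cond} and Proposition \ref{lemmapreserv}. That hypothesis is not part of the proposition: the statement assumes only $\supp(m_1)=\G$, and the remark following the main theorem relies on the proposition holding in exactly this generality (for measures that need not have a density at all). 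So, as written, your argument proves a weaker statement under stronger assumptions, and your closing claim that the lower bound on the density is ``essential'' is incorrect. The correct passage is topological, not measure-theoretic: the projection $A$ of $\supp(\gamma)$ onto the first factor is compact (continuous image of the compact set $\supp(\gamma)$), hence closed, and it carries full $m_1$-measure; since $\supp(m_1)$ is by definition the smallest closed set of full $m_1$-measure, $A\supseteq\supp(m_1)=\G$, i.e.\ $A=\G$. Therefore \emph{every} $x_0\in\G\setminus\cV$ at which both potentials are differentiable admits a partner $y_0$ with $(x_0,y_0)\in\supp(\gamma)$, and equality of the derivatives holds Lebesgue-a.e.\ on each edge with no absolute continuity of $m_1$ whatsoever. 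This is what the paper's phrase ``$\partial\phi_0=\partial\phi_1$ on $\supp(m_1)\cap(\G\setminus\cV)$'' encodes.

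A second, smaller gap: ``without loss of generality $m_1$ is supported on the whole $\G$'' is not legitimate, because the Kantorovich potential is attached to the $m_1$-side of the duality, so the roles of $m_1$ and $m_2$ are not interchangeable. If only $m_2$ has full support, you must run the argument on the $c$-transforms: $\psi_i=\phi_i^c$ are potentials for the reversed problem, so $\psi_1-\psi_2$ is constant, and then $c$-concavity, $\phi_i=(\phi_i^c)^c$, transfers the conclusion back to $\phi_1-\phi_2$. The paper carries out exactly this step; your proof omits it.
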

\begin{proof}
Let us assume that $\supp(m_1)=\Gamma$. By Proposition \ref{normkantorovich}, a Kantorovich potential is bounded and Lipschitz continuous,  therefore it is differentiable a.e. on $\G$. Consider two  Kantorovich potentials $\phi_0$ and $\phi_1$. By Proposition \ref{optimaltransportnetwork1}, it follows that $\partial\phi_0=\partial\phi_1$ on $\supp(m_1)\cap (\G\setminus\cV)$ and therefore a.e. in $\Gamma$.  Since $\Gamma$ is connected, we have that $\phi_0-\phi_1$ is constant on $\G$. If the measure with full support is $m_2$, we  apply the previous argument to get the uniqueness of $\psi=\phi^c$. Then, from $\phi=\psi^c$, we also recover the uniqueness of $\phi$.
\end{proof}
We conclude with a stability property for the Kantorovich potentials.
\begin{prop}\label{prop:stability_pot}
	Let $(m_1,m_2)$, $(m^n_1,m^n_2)\in \cM\times \cM$ be such that $\Wass_1(m^n_\iota,m_\iota)\lra 0$ for $n\to\infty$, $\iota=1,2$. Assume that $\supp(m_1)=\supp(m^n_1)=\Gamma$ and let $\phi$, $\phi^n$ be  the Kantorovich potential, renormalized in such a way that $\int_\G \phi(x)dx=\int_\G\phi^n(x)dx=0$, for $ \cC(m_1,m_2)$ and, respectively, $ \cC(m^n_1,m^n_2)$.
	Then $\phi^n\lra\phi$   uniformly in $\G$.
\end{prop}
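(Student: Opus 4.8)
The plan is to combine an Arzel\`a--Ascoli compactness argument for the renormalized potentials with the uniqueness statement of Proposition \ref{optimaltransportnetwork2}, identifying every subsequential limit as \emph{the} normalized Kantorovich potential of $\cC(m_1,m_2)$, and then concluding by the usual subsequence principle. First I would record the relevant quantitative control: by Proposition \ref{normkantorovich} the potentials $\phi^n$, normalized by $\int_\G\phi^n(x)dx=0$, are equi-Lipschitz with a constant depending only on $c$ and $\G$, and uniformly bounded in $L^\infty(\G)$. Since $\G$ is compact, Arzel\`a--Ascoli yields relative compactness of $\{\phi^n\}$ in $C(\G)$, so from an arbitrary subsequence I can extract a sub-subsequence $\phi^{n_k}\to\tilde\phi$ uniformly. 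Because the $c$-transform is $1$-Lipschitz for the sup-norm, uniform convergence is preserved under it: $(\phi^{n_k})^c\to\tilde\phi^c$ and $(\phi^{n_k})^{cc}\to\tilde\phi^{cc}$ uniformly. As each $\phi^{n_k}$ is $c$-concave, i.e. $\phi^{n_k}=(\phi^{n_k})^{cc}$, the limit satisfies $\tilde\phi=\tilde\phi^{cc}$ and is therefore $c$-concave; moreover the normalization $\int_\G\tilde\phi(x)dx=0$ passes to the limit.

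Next I would identify $\tilde\phi$ as a Kantorovich potential for $\cC(m_1,m_2)$. Recall that $\Wass_1$-convergence implies weak convergence, so $m_\iota^{n_k}\to m_\iota$ weakly. Writing $\int_\G\phi^{n_k}dm_1^{n_k}-\int_\G\tilde\phi\,dm_1=\int_\G(\phi^{n_k}-\tilde\phi)\,dm_1^{n_k}+\int_\G\tilde\phi\,d(m_1^{n_k}-m_1)$, the first term vanishes by uniform convergence and the second by weak convergence against the continuous function $\tilde\phi$; the same holds for the $\phi^c$/$m_2$ pairing. Passing to the limit in the optimality identity $\cC(m_1^{n_k},m_2^{n_k})=\int_\G\phi^{n_k}dm_1^{n_k}+\int_\G(\phi^{n_k})^c dm_2^{n_k}$ then gives $\int_\G\tilde\phi\,dm_1+\int_\G\tilde\phi^c dm_2=\lim_k\cC(m_1^{n_k},m_2^{n_k})$. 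To turn this into optimality I would prove $\cC(m_1^{n_k},m_2^{n_k})\to\cC(m_1,m_2)$: using the optimal pair $(\phi,\phi^c)$ of the limit problem as a competitor in the dual of $\cC(m_1^{n_k},m_2^{n_k})$ — legitimate since the constraint $\phi(x)+\phi^c(y)\le c(x,y)$ is independent of the marginals — yields $\liminf_k\cC(m_1^{n_k},m_2^{n_k})\ge\cC(m_1,m_2)$, while weak duality applied to the admissible pair $(\tilde\phi,\tilde\phi^c)$ gives $\lim_k\cC(m_1^{n_k},m_2^{n_k})=\int_\G\tilde\phi\,dm_1+\int_\G\tilde\phi^c dm_2\le\cC(m_1,m_2)$. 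Hence $(\tilde\phi,\tilde\phi^c)$ is optimal, so $\tilde\phi$ is a $c$-concave Kantorovich potential for $\cC(m_1,m_2)$ with zero mean.

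Finally, since $\supp(m_1)=\G$, Proposition \ref{optimaltransportnetwork2} provides uniqueness of the Kantorovich potential up to additive constants; together with the normalization $\int_\G\tilde\phi(x)dx=0$ this forces $\tilde\phi=\phi$. As every subsequence of $\{\phi^n\}$ thus admits a sub-subsequence converging uniformly to the \emph{same} limit $\phi$, the full sequence converges, $\phi^n\to\phi$ uniformly on $\G$. The step I expect to be the main obstacle is the joint passage to the limit in the dual identity: one must simultaneously establish stability of the optimal cost $\cC$ under weak convergence of the marginals and control the bilinear pairings $\int_\G\phi^{n_k}dm_\iota^{n_k}$, which is precisely where the uniform (equi-Lipschitz) convergence of the potentials must be matched against the merely weak convergence of the measures, and where the marginal-independence of the dual constraint is exploited.
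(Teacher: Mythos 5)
Your proposal is correct and follows the same architecture as the paper's proof: equi-Lipschitz and uniform bounds from Proposition \ref{normkantorovich}, Arzel\`a--Ascoli compactness, identification of the subsequential limit $\tilde\phi$ as a normalized $c$-concave Kantorovich potential for $\cC(m_1,m_2)$, and conclusion via the uniqueness statement of Proposition \ref{optimaltransportnetwork2}. The one place where you genuinely diverge is the identification step. The paper controls the pairings $\int_\G\phi^n\,d(m_1^n-m_1)$ quantitatively through the Kantorovich--Rubinstein bound $\int_\G f\,d(\mu-\nu)\le L\,\Wass_1(\mu,\nu)$ for $L$-Lipschitz $f$, and it imports the stability of the optimal cost, $\cC(m_1^n,m_2^n)\to\cC(m_1,m_2)$, as a black box from Villani's book. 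You instead split each pairing into a uniform-convergence term (tested against $m_1^{n_k}$) and a weak-convergence term (tested with the fixed continuous function $\tilde\phi$), and you prove the cost stability yourself by a two-sided duality argument: the limit pair $(\phi,\phi^c)$ is admissible for the dual of the approximating problems because the constraint $\phi(x)+\phi^c(y)\le c(x,y)$ does not involve the marginals, giving the liminf inequality, while weak duality applied to $(\tilde\phi,\tilde\phi^c)$ gives the reverse bound. Your version is thus self-contained where the paper cites an external stability theorem, at the cost of a slightly longer argument; you also justify the $c$-concavity of $\tilde\phi$ cleanly via the $1$-Lipschitz property of the $c$-transform in sup-norm (the paper merely asserts it), and you make the final subsequence-principle step explicit, which the paper glosses over. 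Both arguments are sound.
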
 
\begin{proof}
First observe that, by Proposition \ref{normkantorovich}, the sequence $\{\phi^n\}_n$ is uniformly bounded and  Lipschitz continuous in $\G$. Hence, by Ascoli-Arzela's Theorem, it converges, up to a subsequence, to a continuous $\tilde \phi$ on $\G$ with $\int_\G\tilde\phi dx=0$. Moreover, $\phi^{n,c}\lra\tilde\phi^c$ and $\tilde\phi$ is a $c$-concave function.
If $L$ is the Lipschitz constant of $\phi^{n}$ and $\phi^{n,c}$, see Proposition \ref{normkantorovich}, we have
\begin{align*}
	& \cC(m_1^n,m_2^n)\\
&	=\int_{\Gamma} \phi^n(x) d(m^n_1-m_1)+\int_{\Gamma}\phi^{n,c}(y)d(m^n_2-m_2)+
	\int_{\Gamma}\tilde\phi^n(x) dm_1+\int_{\Gamma}\tilde\phi^{c,n}(y)dm_2\\
&\le L(\Wass_1(m^n_1,m_1)+\Wass_1(m^n_2,m_2))+	 
\int_{\Gamma}\tilde\phi^n(x) dm_1+\int_{\Gamma}\tilde\phi^{c,n}(y)dm_2.
\end{align*}
Passing to the limit for $n\to\infty$ in the previous identity and since $ \cC(m^n_1,m^n_2)\lra  \cC(m_1,m_2)$ for $n\to \infty$  (see \cite[theorem 5.20]{villani2}), we have
\[ \cC(m_1,m_2)\le \int_{\Gamma}\tilde\phi(x) dm_1+\int_{\Gamma}\tilde\phi^c(y)dm_2 ,  \]
and therefore $\tilde \phi$ is Kantorovich potential for $ \cC(m_1,m_2)$. Hence, by Proposition  \ref{optimaltransportnetwork2}, $\tilde \phi=\phi$ and all the sequence $\phi^n$ converges to $\phi$ uniformly in $\G$.
\end{proof}


\end{document}